\providecommand{\U}[1]{\protect\rule{.1in}{.1in}}
\newtheorem{theorem}{Theorem}[section]
\theoremstyle{plain}
\newtheorem{corollary}{Corollary}[section]
\newtheorem{lemma}{Lemma}[section]
\newtheorem{proposition}{Proposition}[section]
\numberwithin{equation}{section}
\begin{document}
\title[ ]{Sharp singular Adams inequalities in high order Sobolev spaces}
\author{Nguyen Lam}
\author{Guozhen Lu}
\address{Nguyen Lam and Guozhen Lu\\
Department of Mathematics\\
Wayne State University\\
Detroit, MI 48202, USA\\
Emails: nguyenlam@wayne.edu and gzlu@math.wayne.edu}
\thanks{Corresponding Author: G. Lu at gzlu@math.wayne.edu}
\thanks{Research is partly supported by a US NSF grant DMS0901761.}
\date{\today}
\keywords{Moser-Trudinger inequalities, Adams type inequalities, singular Adams
inequalities, fractional integrals.}
\dedicatory{ }
\begin{abstract}
In this paper, we prove a version of weighted inequalities of exponential type
for fractional integrals with sharp constants in any domain of finite measure
in $\mathbb{R}^{n}$. Using this we prove a sharp singular Adams inequality in
high order Sobolev spaces in bounded domain at critical case. Then we prove
sharp singular Adams inequalities for high order derivatives on unbounded
domains. Our results extend the singular Moser-Trudinger inequalities of first
order in \cite{Ad2, R, LR, AdY} to the higher order Sobolev spaces $W^{m,
\frac{n}{m}}$ and the results of \cite{RS} on Adams type inequalities in
unbounded domains to singular case. Our singular Adams inequality on
$W^{2,2}\left(  \mathbb{R}^{4}\right)  $ with standard Sobolev norm at the critical case settles a
unsolved question remained in \cite{Y}.

\end{abstract}
\maketitle

\section{Introduction}

Let $\Omega\subset%
\mathbb{R}
^{n}$, $n\geq2$ be a smooth bounded domain, and $W_{0}^{1,n}\left(
\Omega\right)  $ be the completion of $C_{0}^{\infty}\left(  \Omega\right)  $
under the norm $\left\Vert u\right\Vert _{W_{0}^{1,n}\left(  \Omega\right)
}=\left[  \int_{\Omega}\left(  \left\vert u\right\vert ^{2}+\left\vert \nabla
u\right\vert ^{2}\right)  dx\right]  ^{1/2}$. The classical Moser-Trudinger
inequality \cite{Mo, Po, Tru, Yu} which plays an important role in analysis
says that
\[
\underset{u\in W_{0}^{1,n}\left(  \Omega\right)  ,~\left\Vert \nabla
u\right\Vert _{n}\leq1}{\sup}\frac{1}{\left\vert \Omega\right\vert }%
\int_{\Omega}\exp\left(  \beta\left\vert u\right\vert ^{\frac{n}{n-1}}\right)
dx<+\infty
\]
for any $\beta\leq\beta_{n}=n\omega_{n-1}^{\frac{1}{n-1}}$, where
$\omega_{n-1}=\frac{2\pi^{\frac{n}{2}}}{\Gamma\left(  \frac{n}{2}\right)  }$
is the area of the surface of the unit $n-$ball. Moreover, this constant
$\beta_{n}$ is sharp in the sense that if $\beta>\beta_{n}$, then supremum is
infinity. Here and in the sequel, for any real number $p>1,$\ $\left\Vert
\cdot\right\Vert _{p}$ denotes the $L^{p}$-norm with respect to the Lebesgue measure.

There is also another famous inequality in analysis: the Hardy inequality.
Thus it is very natural to establish an interpolation of Hardy inequality and
Moser-Trudinger inequality. Inspired by the following Hardy inequality
\cite{AdCR}:%
\[
\left(  \frac{n-1}{n}\right)  ^{n}\int_{\Omega}\frac{\left\vert u\right\vert
^{n}}{\left\vert x\right\vert ^{n}\left(  \log\frac{R}{\left\vert x\right\vert
}\right)  ^{n}}dx\leq\int_{\Omega}\left\vert \nabla u\right\vert ^{n}%
\]
where $R\geq es\underset{\Omega}{\sup}\left\vert x\right\vert $, Adimurthi and
Sandeep proved in \cite{Ad2} a singular Moser-Trudinger inequality with the
sharp constant:

\bigskip\textbf{Theorem A.} \textit{Let $\Omega$ be an open and bounded set in
}$\mathbb{R}^{n}$\textit{. There exists a constant $C_{0}=C_{0}(n,\left\vert
\Omega\right\vert )>0$ such that }%
\[
\int_{\Omega}\frac{\exp\left(  \beta\left\vert u\right\vert ^{\frac{n}{n-1}%
}\right)  }{\left\vert x\right\vert ^{\alpha}}dx\leq C_{0}%
\]
\textit{for any }$\alpha\in\left[  0,n\right)  ,~0\leq\beta\leq\left(
1-\frac{\alpha}{n}\right)  \beta_{n}$\textit{, any }$u\in W_{0}^{1,n}\left(
\Omega\right)  $\textit{ with }$\int_{\Omega}\left\vert \nabla u\right\vert
^{n}dx\leq1$\textit{. Moreover, this constant }$\left(  1-\frac{\alpha}%
{n}\right)  \beta_{n}$\textit{ is sharp in the sense that if }$\beta>\left(
1-\frac{\alpha}{n}\right)  \beta_{n}$\textit{, then the above inequality can
no longer hold with some }$C_{0}$\textit{ independent of }$u$\textit{.}

There is another improved Moser-Trudinger inequality on the disk in $%
\mathbb{R}
^{2}$, which was recently proved and studied in \cite{Ad6, MS}:%
\[
\underset{u\in W_{0}^{1,2}\left(  B\right)  ,~\left\Vert \nabla u\right\Vert
_{2}\leq1}{\sup}\int_{B}\frac{\exp\left(  4\pi\left\vert u\right\vert
^{2}\right)  -1}{\left(  1-\left\vert x\right\vert ^{2}\right)  ^{2}%
}dx<+\infty.
\]
Very recently, Wang and Ye \cite{WY} proved an interesting
Hardy-Moser-Trudinger inequality on the unit disk in $%
\mathbb{R}
^{2}$, which improves the classical Moser-Trudinger inequality and the
classical Hardy inequality at the same time. Namely, there exists a constant
$C_{0}>0$ such that%
\[
\int_{B}e^{\frac{4\pi u^{2}}{H(u)}}dx\leq C_{0}<\infty,~\forall u\in
C_{0}^{\infty}\left(  B\right)  \setminus\left\{  0\right\}  ,
\]
where
\[
H(u)=\int_{B}\left\vert \nabla u\right\vert ^{2}dx-\int_{B}\frac{u^{2}%
}{\left(  1-\left\vert x\right\vert ^{2}\right)  ^{2}}dx.
\]

\bigskip We notice that when $\Omega$ has infinite volume, the usual
Moser-Trudinger inequalities become meaningless. In the case $\left\vert
\Omega\right\vert =+\infty$, the following modified Moser-Trudinger type
inequality can be established:

\bigskip\textbf{Theorem B.} \textit{For all }$\beta>0,~0\leq\alpha<n$\textit{
and }$u\in W^{1,n}\left(
\mathbb{R}
^{n}\right)  \ (n\geq2)$\textit{, there holds}%
\[
\int_{%
\mathbb{R}
^{n}}\frac{\phi\left(  \beta\left\vert u\right\vert ^{\frac{n}{n-1}}\right)
}{\left\vert x\right\vert ^{\alpha}}dx<\infty.
\]
\textit{Furthermore, we have for all }$\beta\leq\left(  1-\frac{\alpha}%
{n}\right)  \beta_{n}$\textit{ and }$\tau>0,$%
\[
\underset{\left\Vert u\right\Vert _{1,\tau}\leq1}{\sup}\int_{%
\mathbb{R}
^{n}}\frac{\phi\left(  \beta\left\vert u\right\vert ^{\frac{n}{n-1}}\right)
}{\left\vert x\right\vert ^{\alpha}}dx<\infty
\]
\textit{where }%
\begin{align*}
\phi(t)  &  =e^{t}-%
{\displaystyle\sum\limits_{j=0}^{n-2}}
\frac{t^{j}}{j!}\\
\left\Vert u\right\Vert _{1,\tau}  &  =\left(  \int_{%
\mathbb{R}
^{n}}\left(  \left\vert \nabla u\right\vert ^{n}+\tau\left\vert u\right\vert
^{n}\right)  dx\right)  ^{1/n}.
\end{align*}
\textit{Moreover, this constant }$\left(  1-\frac{\alpha}{n}\right)  \beta
_{n}$\textit{ is sharp in the sense that if }$\beta>\left(  1-\frac{\alpha}%
{n}\right)  \beta_{n}$\textit{, then the supremum is infinity.}

The above modified Moser-Trudinger type inequality when $\alpha=0$ was
established by B. Ruf \cite{R} in dimension two and Y.X. Li and Ruf \cite{LR}
in general dimension. It was then extended to the singular case $0\leq
\alpha<n$ by Adimurthi and Yang \cite{AdY}. Indeed, such type of inequality on
unbounded domains in the subcritical case $\beta<\beta_{n}$ ($\alpha=0$) was
first established by D. Cao \cite{C} in dimension two and by Adachi and Tanaka
\cite{AT} in high dimension.

In the case of compactly supported functions, D. Adams \cite{A} extended the
original Moser-Trudinger inequality to the higher order space $W_{0}%
^{m,\frac{n}{m}}(\Omega)$. In fact, Adams proved the following inequality:

\bigskip\textbf{Theorem C.} \textit{ There exists a constant} $C_{0}=C(n,m)>0$
\textit{such that for any } $u\in W_{0}^{m,\frac{n}{m}}(\Omega)$ \textit{and}
$||\nabla^{m}u||_{L^{\frac{n}{m}}(\Omega)}\leq1$, \textit{then}
\[
\frac{1}{|\Omega|}\int_{\Omega}\exp(\beta|u(x)|^{\frac{n}{n-m}})dx\leq C_{0}%
\]
\textit{for all }$\beta\leq\beta(n,m)$ \textit{where}%
\[
\mathit{\beta(n,\ m)\ =}\left\{
\begin{array}
[c]{c}%
\frac{n}{w_{n-1}}\left[  \frac{\pi^{n/2}2^{m}\Gamma(\frac{m+1}{2})}%
{\Gamma(\frac{n-m+1}{2})}\right]  ^{\frac{n}{n-m}}\text{ }\mathrm{when}%
\,\,\,m\,\,\mathrm{is\,\,odd}\\
\frac{n}{w_{n-1}}\left[  \frac{\pi^{n/2}2^{m}\Gamma(\frac{m}{2})}{\Gamma
(\frac{n-m}{2})}\right]  ^{\frac{n}{n-m}}\text{ \ }\mathrm{when}%
\,\,\,m\,\,\mathrm{is\,\,even}%
\end{array}
\right.  .
\]
\textit{Furthermore, for any } $\beta>\beta(n,m)$, \textit{the integral can be
made as large as possible.}

Note that $\beta(n,1)$ coincides with Moser's value of $\beta_{n}$ and
$\beta(2m,m)=2^{2m}\pi^{m}\Gamma(m+1)$ for both odd and even $m$. Here, we use
the symbol $\nabla^{m}u$, where $m$ is a positive integer, to denote the
$m-$th order gradient for $u\in C^{m}$, the class of $m-$th order
differentiable functions:%
\[
\nabla^{m}u=\left\{
\begin{array}
[c]{l}%
\bigtriangleup^{\frac{m}{2}}u\text{ \ \ \ \ \ \ }\mathrm{for}%
\,\,\,m\,\,\mathrm{even}\\
\nabla\bigtriangleup^{\frac{m-1}{2}}u\,\,\ \mathrm{for}\,\,\,m\,\,\mathrm{odd}%
\end{array}
\right.  .
\]
where $\nabla$ is the usual gradient operator and $\bigtriangleup$ is the
Laplacian. We use $||\nabla^{m}u||_{p}$ to denote the $L^{p}$ norm ($1\leq
p\leq\infty$) of the function $|\nabla^{m}u|$, the usual Euclidean length of
the vector $\nabla^{m}u$. We also use $W_{0}^{k,p}(\Omega)$ to denote the
Sobolev space which is a completion of $C_{0}^{\infty}(\Omega)$ under the norm
of $\left(
{\displaystyle\sum\limits_{j=0}^{k}}
||\nabla^{j}u||_{L^{p}(\Omega)}^{p}\right)  ^{1/p}$.

Recently, in the setting of the Sobolev space with homogeneous Navier boundary
conditions $W_{N}^{m,\frac{n}{m}}\left(  \Omega\right)  :$%

\[
W_{N}^{m,\frac{n}{m}}\left(  \Omega\right)  :=\left\{  u\in W^{m,\frac{n}{m}%
}:\Delta^{j}u=0\text{ on }\partial\Omega\text{ for }0\leq j\leq\left[
\frac{m-1}{2}\right]  \right\}  ,
\]
the Adams inequality was extended by Tarsi \cite{Ta}. Note that $W_{N}%
^{m,\frac{n}{m}}\left(  \Omega\right)  $ contains the Sobolev space
$W_{0}^{m,\frac{n}{m}}\left(  \Omega\right)  $\textit{ }as a closed subspace.

The Adams type inequality on Sobolev spaces $W_{0}^{m,\frac{n}{m}}\left(
\Omega\right)  $ when $\Omega$ has infinite volume and $m$ is an even integer
was studied recently by Ruf and Sani \cite{RS}. In fact, they proved the following

\bigskip\textbf{Theorem D.} \textit{If $m$ is an even integer less than} $n$,
\textit{ then there exists a constant} $C_{m,n}>0$ \textit{ such that for any
domain} $\Omega\subseteq%
\mathbb{R}
^{n}$%
\[
\underset{u\in W_{0}^{m,\frac{n}{m}}\left(  \Omega\right)  ,\left\Vert
u\right\Vert _{m,n}\leq1}{\sup}\int_{\Omega}\phi\left(  \beta_{0}\left(
n,m\right)  \left\vert u\right\vert ^{\frac{n}{n-m}}\right)  dx\leq C_{m,n}%
\]
\textit{ where}
\begin{align*}
\beta_{0}\left(  n,m\right)   &  =\frac{n}{\omega_{n-1}}\left[  \frac
{\pi^{\frac{n}{2}}2^{m}\Gamma\left(  \frac{m}{2}\right)  }{\Gamma\left(
\frac{n-m}{2}\right)  }\right]  ^{\frac{n}{n-m}},\\
\phi(t)  &  =e^{t}-%
{\displaystyle\sum\limits_{j=0}^{j_{\frac{n}{m}}-2}}
\frac{t^{j}}{j!}\\
j_{\frac{n}{m}}  &  =\min\left\{  j\in%
\mathbb{N}
:j\geq\frac{n}{m}\right\}  \geq\frac{n}{m}.
\end{align*}
\textit{Moreover, this inequality is sharp in the sense that if we replace
$\beta_{0}(n, m)$ by any larger $\beta$, then the above supremum will be
infinity. }

In the above result, Ruf and Sani used the norm
\[
\left\Vert u\right\Vert _{m,n}=\left\Vert \left(  -\Delta+I\right)  ^{\frac
{m}{2}}u\right\Vert _{\frac{n}{m}}%
\]
which is equivalent to the standard Sobolev norm
\[
\left\Vert u\right\Vert _{W^{m,\frac{n}{m}}}=\left(  \left\Vert u\right\Vert
_{\frac{n}{m}}^{\frac{n}{m}}+%
{\displaystyle\sum\limits_{j=1}^{m}}
\left\Vert \nabla^{j}u\right\Vert _{\frac{n}{m}}^{\frac{n}{m}}\right)
^{\frac{m}{n}}.
\]
In particular, if $u\in W_{0}^{m,\frac{n}{m}}\left(  \Omega\right)  $ or $u\in
W^{m,\frac{n}{m}}\left(
\mathbb{R}
^{n}\right)  $, then $\left\Vert u\right\Vert _{W^{m,\frac{n}{m}}}\leq$
$\left\Vert u\right\Vert _{m,n}$.

Because the result of Ruf and Sani \cite{RS} only treats the case when $m$ is
even, thus it leaves an open question if Ruf and Sani's theorem still holds
when $m$ is odd. Recently, the authors of \cite{LaLu4} have established the
results of Adams type inequalities on unbounded domains when $m$ is odd. More
precisely, the first result of \cite{LaLu4} is as follows:

\textbf{Theorem E.} \textit{ Let }$m$\textit{ be an odd integer less than }%
$n$\textit{: }$m=2k+1,~k\in%
\mathbb{N}
$\textit{. There holds}%
\[
\underset{u\in W^{m,\frac{n}{m}}\left(
\mathbb{R}
^{n}\right)  ,\left\Vert \nabla\left(  -\Delta+I\right)  ^{k}u\right\Vert
_{\frac{n}{m}}^{\frac{n}{m}}+\left\Vert \left(  -\Delta+I\right)
^{k}u\right\Vert _{\frac{n}{m}}^{\frac{n}{m}}\leq1}{\sup}\int_{%
\mathbb{R}
^{n}}\phi\left(  \beta\left(  n,m\right)  \left\vert u\right\vert ^{\frac
{n}{n-m}}\right)  dx<\infty.
\]
\textit{Moreover, the constant }$\beta(n,m)$\textit{ is sharp.}

In the special case $n=2m,$ we have the following stronger results in
\cite{LaLu4}:

\bigskip\textbf{Theorem F.} \textit{ Let }$m=2k+1,~k\in%
\mathbb{N}
$. \textit{For all} $\tau>0$, \textit{there holds}%
\[
\underset{u\in W^{m,2}\left(
\mathbb{R}
^{2m}\right)  ,\left\Vert \nabla\left(  -\Delta+\tau I\right)  ^{k}%
u\right\Vert _{2}^{2}+\tau\left\Vert \left(  -\Delta+\tau I\right)
^{k}u\right\Vert _{2}^{2}\leq1}{\sup}\int_{%
\mathbb{R}
^{2m}}\left(  e^{\beta(2m, m)u^{2}}-1\right)  dx<\infty.
\]
\textit{Moreover, the constant }$\beta(2m, m)$ \textit{is sharp in the sense
that if we replace} $\beta(2m, m)$ \textit{by any} $\beta>\beta(2m, m)$,
\textit{then the supremum is infinity.}

The result of \cite{RS} (stated as Theorem D above) for $m$ being even were
also extended recently using the standard Sobolev norm by Yang in the special
case $n=4$ and $m=2$ \cite{Y} and by the authors \cite{LaLu4} to the case
$n=2m$ for all $m$ being both odd and even. More precisely, the following has
been established by the authors in \cite{LaLu4}:

\bigskip\textbf{Theorem G.} \textit{Let} $m\geq2$ \textit{be an integer. For
all constants} $a_{0}=1,a_{1},...,a_{m}>0$, \textit{there holds}%
\[
\underset{u\in W^{m,2}\left(
\mathbb{R}
^{2m}\right)  ,\int_{%
\mathbb{R}
^{2m}}\left(
{\displaystyle\sum\limits_{j=0}^{m}}
a_{m-j}\left\vert \nabla^{j}u\right\vert ^{2}\right)  dx\leq1}{\sup}\int_{%
\mathbb{R}
^{2m}}\left[  \exp\left(  \beta\left(  2m,m\right)  \left\vert u\right\vert
^{2}\right)  -1\right]  dx<\infty.
\]
\textit{ Furthermore this inequality is sharp, i.e., if} $\beta(2m,m)$
\textit{is replaced by any} $\beta>\beta(2m,m)$, \textit{then the supremum is
infinite.}

As a corollary of the above theorem, we have the following Adams type
inequality with the standard Sobolev norm:

\bigskip\textbf{Theorem H.} \textit{ Let} $m\geq1$ \textit{be an integer.
There holds}%
\[
\underset{u\in W^{m,2}\left(
\mathbb{R}
^{2m}\right)  ,\left\Vert u\right\Vert _{W^{m,2}}\leq1}{\sup}\int_{%
\mathbb{R}
^{2m}}\left[  \exp\left(  \beta\left(  2m,m\right)  \left\vert u\right\vert
^{2}\right)  -1\right]  dx<\infty.
\]
\textit{Furthermore this inequality is sharp, i.e., if} $\beta(2m,m)$ \textit{
is replaced by any} $\alpha>\beta(2m,m)$, \textit{then the supremum is
infinite.}

Moser-Trudinger type inequalities and Adams type inequalities have important
applications in geometric analysis and partial differential equations,
especially in the study of the exponential growth partial differential
equations where the nonlinear term behaves like $e^{\alpha\left\vert
u\right\vert ^{\frac{n}{n-m}}}$ as $\left\vert u\right\vert \rightarrow\infty
$. There has been a vast literature in this direction. We refer the interested
reader to \cite{AP}, \cite{CC}, \cite{Ad1}, \cite{Ad2}, \cite{ASt},
\cite{AdY}, \cite{Ad4}, \cite{DeMR}, \cite{Do}, \cite{DeDoR}, \cite{LaLu1,
LaLu3} and the references therein.

In this paper, we will first establish a sharp inequality of exponential type
with weights $\frac{1}{|x|^{\alpha}}$ for the fractional integrals.

\begin{theorem}
Let $1<p<\infty$, $0\leq\alpha<n$ and $\Omega\subset\mathbb{R}^{n}$ be an open
set with $|\Omega|<\infty$. Then there is a constant $c_{0}=c_{0}(p,\Omega)$
such that for all $f\in L^{p}\left(
\mathbb{R}
^{n}\right)  $ with support contained in $\Omega$,
\[
\int_{\Omega}\frac{\exp\left(  \left(  1-\frac{\alpha}{n}\right)  \frac
{n}{\omega_{n-1}}\left\vert \frac{I_{\gamma}\ast f(x)}{\left\Vert f\right\Vert
_{p}}\right\vert ^{p^{\prime}}\right)  }{\left\vert x\right\vert ^{\alpha}%
}dx\leq c_{0},
\]
where $\gamma=n/p$ and $I_{\gamma}\ast f(x)=\int\left\vert x-y\right\vert
^{\gamma-n}f(y)dy$ is the Riesz potential of order $\gamma$.
\end{theorem}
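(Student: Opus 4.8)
The plan is to reduce the singular weighted inequality to the non-singular one via a rearrangement-and-splitting argument on the weight $|x|^{-\alpha}$, combined with the classical O'Neil-type estimate for the rearrangement of a convolution. Write $F(x) = I_\gamma * f(x)$ and normalize $\|f\|_p = 1$. The exponent $p' = n/(n-\gamma) = n/(n-n/p)$, and I want to show $\int_\Omega |x|^{-\alpha}\exp\bigl((1-\alpha/n)\tfrac{n}{\omega_{n-1}}|F(x)|^{p'}\bigr)\,dx \le c_0$.

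First I would invoke O'Neil's lemma: for the Riesz kernel $I_\gamma(x) = |x|^{\gamma-n}$, whose decreasing rearrangement is $I_\gamma^*(t) = (t/\omega_{n-1})^{(\gamma-n)/n} \cdot (\text{const})$ — more precisely $I_\gamma^*(t) = c_n t^{\gamma/n - 1}$ with the sharp constant $c_n = (n/\omega_{n-1})^{1-\gamma/n}\cdot$(a factor I'd pin down) — one has the pointwise bound
\[
F^{**}(t) \le \int_0^t I_\gamma^*(s)\,f^*(s)\,ds + \int_t^\infty I_\gamma^*(s)\,f^*(s)\,ds \cdot \text{(tail correction)},
\]
and after the standard manipulation this yields
\[
F^*(t) \le C + c_n \int_t^{|\Omega|} s^{\gamma/n - 1} f^*(s)\,ds + c_n\, t^{\gamma/n}\Bigl(\int_0^\infty f^*(s)^p\,ds\Bigr)^{1/p} t^{-\gamma/n}\cdot(\ldots),
\]
so modulo bounded terms, $F^*(t) \lesssim \int_t^{|\Omega|}(n/\omega_{n-1})^{1/p'} s^{-1/p} f^*(s)\, ds/s^{\,\ast}$ — the precise statement I need is the one-dimensional Hardy-type kernel inequality
\[
|F^*(t)| \le \Bigl(\tfrac{n}{\omega_{n-1}}\Bigr)^{1/p'}\int_t^{|\Omega|} \varphi(s)\, \frac{ds}{s} + (\text{bounded}),
\qquad \Bigl(\int_0^{|\Omega|}\varphi(s)^p\,\frac{ds}{s}\Bigr)^{1/p} \le 1,
\]
where $\varphi(s) = s^{1/p} f^*(s)$. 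This is exactly the hypothesis of the classical Adams/Garsia–Rodemich lemma on exponential integrability of such one-dimensional integral operators.

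Next, the singular weight. Using the Hardy–Littlewood inequality $\int_\Omega g\, w \le \int_0^{|\Omega|} g^*(t)\, w^*(t)\, dt$ is too lossy; instead I would handle the weight by a change of variables. Set $u = |x|^{-\alpha}$-type substitution: split $\Omega$ into the part near the origin and the part away. On $\{|x| \ge \delta\}$ the weight is bounded and we reduce directly to Theorem C / Theorem A type estimates. On $\{|x| < \delta\} \cap \Omega$, parametrize by spheres: $\int_{\{|x|<\delta\}} |x|^{-\alpha}\exp(\cdots)\,dx = \omega_{n-1}\int_0^\delta r^{n-1-\alpha}\exp(\cdots)\,dr$, and substitute $r = \rho^{n/(n-\alpha)}$ so that $r^{n-1-\alpha}\,dr = \tfrac{n-\alpha}{n}\,\rho^{n-1}\,d\rho$ up to the volume factor — this turns $dx/|x|^\alpha$ into (a constant times) $n$-dimensional Lebesgue measure in the new radial variable, at the cost of composing $F$ with $r \mapsto \rho^{n/(n-\alpha)}$. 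Tracking how this substitution interacts with the rearrangement bound above, the level-set measure $|\{x : |F(x)| > \lambda\}|$ gets replaced by a measure that scales by $(1-\alpha/n)$ in the relevant exponent, which is precisely the source of the factor $(1-\alpha/n)$ improving the admissible constant from $\tfrac{n}{\omega_{n-1}}$ to $(1-\tfrac{\alpha}{n})\tfrac{n}{\omega_{n-1}}$; equivalently, in the $\varphi$-normalization the constraint becomes $\int_0^{|\Omega|}\varphi^p\,\frac{ds}{s} \le 1$ but the kernel picks up the factor $(1-\alpha/n)^{1/p'}$.

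The cleanest route, which I would actually carry out, is: (i) prove the one-dimensional lemma — if $h(t) \ge 0$ on $(0,1)$ with $\int_0^1 h(t)^p\,dt \le 1$ and $\psi(s) = \int_s^1 t^{-1/p} h(t)\,dt$ (adjusted exponent), then $\int_0^1 \exp\bigl(b\,|\psi(s)|^{p'}\bigr)\,ds < \infty$ for $b \le b_0$ with the sharp $b_0 = p'$ coming from Jensen/Hölder — this is the Adams lemma and I would cite or reprove it; (ii) show via O'Neil that $F^*$ satisfies this bound with the appropriate $b_0 = \tfrac{n}{\omega_{n-1}}$; (iii) insert the weight by the radial substitution, which replaces $ds$ by (const)$\cdot s^{-\alpha/n}\,ds$-type measure and hence, re-running the Adams lemma against this measure, shrinks the sharp $b_0$ by exactly the factor $(1-\alpha/n)$. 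The main obstacle I anticipate is step (iii): making the weight substitution commute cleanly with the convolution structure — the rearrangement $F^*$ is defined with respect to Lebesgue measure, not the weighted measure, so I need the sharper statement that controls the distribution function of $F$ against the weighted measure $|x|^{-\alpha}dx$ directly. I expect to resolve this by noting that for a \emph{fixed} $f$ supported in $\Omega$, the bound $F^*(t) \le (\tfrac{n}{\omega_{n-1}})^{1/p'}\int_t^{|\Omega|}\varphi\,\tfrac{ds}{s} + C$ already controls \emph{all} super-level sets $\{|F|>\lambda\}$ in Lebesgue measure, and then $\int_\Omega |x|^{-\alpha} e^{b|F|^{p'}} = b\int_0^\infty \lambda^{?}e^{b\lambda}\,\bigl(\int_{\{|F|>\lambda\}}|x|^{-\alpha}dx\bigr)\,d\lambda$ where the inner integral is bounded, using $\int_A |x|^{-\alpha} \le C |A|^{1-\alpha/n}$ (the sharp bound for the weight against a set of given Lebesgue measure, attained by balls at the origin), by $C\,|\{|F|>\lambda\}|^{1-\alpha/n}$ — and this $1-\alpha/n$ power is what trades against the exponent to produce the claimed sharp constant $(1-\alpha/n)\tfrac{n}{\omega_{n-1}}$. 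Finally I would combine these estimates and absorb all the finite constants into $c_0 = c_0(p,\Omega)$, noting the $|\Omega|$-dependence enters through $\int_0^{|\Omega|}$ and the normalization of $\varphi$.
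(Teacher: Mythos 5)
Your final roadmap---O'Neil's lemma to control $F^*$, a one-dimensional Adams-type lemma, and the weight estimate $\int_A |x|^{-\alpha}\,dx \le C|A|^{1-\alpha/n}$---is essentially the paper's argument, and it works; the detour through the radial substitution $r=\rho^{n/(n-\alpha)}$ was unnecessary. Two points of friction. First, your instinct that the Hardy--Littlewood inequality is ``too lossy'' is wrong: the paper applies it directly, obtaining $\int_\Omega e^{(1-\alpha/n)\frac{n}{\omega_{n-1}}|u|^{p'}}|x|^{-\alpha}\,dx \le \sigma_n^{\alpha/n}\int_0^{|\Omega|} e^{(1-\alpha/n)\frac{n}{\omega_{n-1}}|u^*(t)|^{p'}}t^{-\alpha/n}\,dt$, and this is sharp here because $|x|^{-\alpha}$ is already radially decreasing about the origin where the extremizing bubbles concentrate; your level-set estimate is just the integrated form of the same inequality. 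Second, and more substantively, when you say you would ``re-run the Adams lemma against this measure,'' be precise about what changes: after the exponential substitution $t=|\Omega|e^{-s}$, the factor $t^{-\alpha/n}\,dt$ becomes a constant times $e^{-(1-\alpha/n)s}\,ds$, so what is required is a version of Adams' Lemma 1 in which $-F_1(t)=-t+(\int a\phi)^{p'}$ is replaced by $-(1-\tfrac{\alpha}{n})F_1(t)$; this is Lemma 3.1 of the paper. It does follow from Adams' original estimates ($F_1\ge -c$ and $|\{F_1\le\lambda\}|\le A|\lambda|+B$) because scaling $F_1$ by a positive constant merely rescales $\lambda$, but this is not the same as saying ``the kernel picks up the factor $(1-\alpha/n)^{1/p'}$''---that would require rescaling both variables in $a(s,t)$ simultaneously, which does not drop out cleanly. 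With those two corrections, your plan coincides with the paper's proof.
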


Next, we will establish a version of singular Adams inequality on bounded
domains. More precisely, we will prove that:

\begin{theorem}
Let $0\leq\alpha<n$ and $\Omega$ be a bounded domain in $%
\mathbb{R}
^{n}$. Then for all $0\leq\beta\leq\beta_{\alpha,n,m}=\left(  1-\frac{\alpha
}{n}\right)  \beta(n,m)$, we have%
\begin{equation}
\underset{u\in W_{0}^{m,\frac{n}{m}}\left(  \Omega\right)  ,~\left\Vert
\nabla^{m} u\right\Vert _{\frac{n}{m}}\leq1}{\sup}\int_{\Omega}\frac
{e^{\beta\left\vert u\right\vert ^{\frac{n}{n-m}}}}{\left\vert x\right\vert
^{\alpha}}dx<\infty. \label{1.2}%
\end{equation}
When $\beta>\beta_{\alpha,n,m}$, the supremum is infinite. Moreover, when $m$
is an even number, the Sobolev space $W_{0}^{m,\frac{n}{m}}\left(
\Omega\right)  $ in the above supremum can be replaced by a larger Sobolev
space $W_{N}^{m,\frac{n}{m}}\left(  \Omega\right)  .$
\end{theorem}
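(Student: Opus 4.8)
The plan is to read off \eqref{1.2} from the weighted exponential estimate for Riesz potentials, Theorem 1.1, via the Adams representation formula, and to prove optimality with a Moser--Adams concentrating family tuned to the weight $|x|^{-\alpha}$.

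\textbf{Upper bound.} By density of $C_0^{\infty}(\Omega)$ in $W_0^{m,\frac{n}{m}}(\Omega)$ it suffices to treat $u\in C_0^{\infty}(\Omega)$. Let $c_{n,m}>0$ be the constant characterised by $\beta(n,m)=\frac{n}{\omega_{n-1}}c_{n,m}^{-\frac{n}{n-m}}$, i.e. Adams' normalisation of the Riesz kernel. Then one has the pointwise bound
$$
|u(x)|\ \le\ c_{n,m}\int_{\mathbb{R}^{n}}\frac{|\nabla^{m}u(y)|}{|x-y|^{n-m}}\,dy\ =\ c_{n,m}\,\big(I_{m}\ast|\nabla^{m}u|\big)(x),
$$
which for $m$ even is the potential representation of $u$ in terms of $\Delta^{m/2}u$ and its fundamental solution, and for $m$ odd follows from $u(x)=c_{n,m}\int_{\mathbb{R}^{n}}(x-y)\cdot\nabla^{m}u(y)\,|x-y|^{-(n-m+1)}\,dy$ together with $|(x-y)\cdot\nabla^{m}u(y)|\le|x-y|\,|\nabla^{m}u(y)|$. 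Setting $f=|\nabla^{m}u|$ we have $\operatorname{supp}f\subset\overline{\Omega}$ and $\|f\|_{n/m}=\|\nabla^{m}u\|_{n/m}\le1$; with $p=\frac{n}{m}$, so that $\gamma=n/p=m$ and $p'=\frac{n}{n-m}$, every $0\le\beta\le\beta_{\alpha,n,m}$ obeys
$$
\beta\,|u(x)|^{\frac{n}{n-m}}\ \le\ \beta\,c_{n,m}^{\frac{n}{n-m}}\Big|\tfrac{I_{m}\ast f(x)}{\|f\|_{n/m}}\Big|^{\frac{n}{n-m}}\ \le\ \Big(1-\tfrac{\alpha}{n}\Big)\frac{n}{\omega_{n-1}}\Big|\tfrac{I_{m}\ast f(x)}{\|f\|_{n/m}}\Big|^{p'},
$$
since $\beta\,c_{n,m}^{n/(n-m)}\le\beta_{\alpha,n,m}\,c_{n,m}^{n/(n-m)}=(1-\tfrac{\alpha}{n})\,\tfrac{n}{\omega_{n-1}}$. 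Theorem 1.1 then gives $\int_{\Omega}|x|^{-\alpha}e^{\beta|u|^{n/(n-m)}}dx\le c_{0}(n/m,\Omega)$ uniformly, which is \eqref{1.2}. For $m$ even and $u\in W_{N}^{m,\frac{n}{m}}(\Omega)$ the Navier conditions $\Delta^{j}u=0$ on $\partial\Omega$, $0\le j\le\frac{m}{2}-1$, allow one to iterate the Dirichlet Green representation for $-\Delta$ and write $u(x)=\int_{\Omega}G_{N}(x,y)\Delta^{m/2}u(y)\,dy$ with $|G_{N}(x,y)|\le c_{n,m}|x-y|^{m-n}$ (Tarsi's observation), so the same argument applies with $f=|\Delta^{m/2}u|\mathbf{1}_{\Omega}$.

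\textbf{Sharpness.} Let $\beta>\beta_{\alpha,n,m}$; we may assume $0\in\Omega$, which is the decisive case since concentration at the singular point of $|x|^{-\alpha}$ is optimal. Take the higher-order Moser--Adams family $u_{k}\in W_{0}^{m,\frac{n}{m}}(\Omega)$ built from truncated, mollified fundamental solutions of $\nabla^{m}$: $\|\nabla^{m}u_{k}\|_{n/m}=1$, $u_{k}\equiv c_{k}$ on $\{|x|<\delta k^{-1/n}\}$ with $\beta(n,m)\,c_{k}^{n/(n-m)}=\log k+O(1)$, and $u_{k}$ cut off to be supported in a fixed ball $\subset\Omega$. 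Restricting the integral to $\{|x|<\delta k^{-1/n}\}$ and using $\int_{|x|<\delta k^{-1/n}}|x|^{-\alpha}dx=c\,\delta^{n-\alpha}k^{-(n-\alpha)/n}$ yields
$$
\int_{\Omega}\frac{e^{\beta|u_{k}|^{n/(n-m)}}}{|x|^{\alpha}}\,dx\ \ge\ c'\,k^{-\frac{n-\alpha}{n}}e^{\beta c_{k}^{n/(n-m)}}\ \ge\ c''\,k^{\,\frac{\beta}{\beta(n,m)}-\frac{n-\alpha}{n}}\ \longrightarrow\ \infty,
$$
because $\frac{\beta}{\beta(n,m)}>\frac{n-\alpha}{n}=1-\frac{\alpha}{n}$. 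Since $u_{k}\in W_{N}^{m,\frac{n}{m}}(\Omega)$ as well, optimality holds in the larger space too.

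\textbf{Main difficulty.} The heart of the matter is the upper bound, namely fixing the representation $|u|\le c_{n,m}I_{m}\ast|\nabla^{m}u|$ with \emph{exactly} the constant $c_{n,m}$ that reproduces Adams' sharp value $\beta(n,m)$; the odd-order vector kernel and, for $W_{N}^{m,\frac{n}{m}}$, the pointwise domination of the Navier Green function up to $\partial\Omega$ are the points needing care. After that the passage to Theorem 1.1 is automatic, and in the sharpness part the only new ingredient over the non-singular situation is the volume factor $k^{-(n-\alpha)/n}$, which is precisely what converts the critical exponent $\beta(n,m)$ into $(1-\frac{\alpha}{n})\beta(n,m)$.
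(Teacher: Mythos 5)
Your proposal follows essentially the same route as the paper: the bounded-domain estimate is read off from Theorem 1.1 via Adams' representation formula $|u|\le c_{n,m}\,I_m*|\nabla^m u|$ (the paper's Lemma \ref{lemma2.1}), the Navier case is handled by comparing $|u|$ with the Riesz potential of $|\Delta^{m/2}u|\mathbf{1}_\Omega$ (the paper phrases this as the Ruf--Sani extension-by-zero plus maximum-principle comparison $\bar u\ge|u|$, which is the same pointwise Green-function bound you cite), and sharpness uses Adams' concentrating family with the plateau volume producing the $k^{-(n-\alpha)/n}$ factor that converts $\beta(n,m)$ into $(1-\alpha/n)\beta(n,m)$. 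The only difference is that you spell out the sharpness computation, which the paper merely asserts "can be checked."
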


Using the above Theorem 1.2, we will then set up the singular Adams inequality
for the space $W^{m,\frac{n}{m}}\left(
\mathbb{R}
^{n}\right)  $ when $m$ is an even integer number:

\begin{theorem}
Let $0\leq\alpha<n$, $m>0$ be an even integer less than $n$. Then for all
$0\leq\beta\leq\beta_{\alpha,n,m}=\left(  1-\frac{\alpha}{n}\right)  \beta
_{0}(n,m)$, we have%
\begin{equation}
\underset{u\in W^{m,\frac{n}{m}}\left(
\mathbb{R}
^{n}\right)  ,\left\Vert \left(  -\Delta+I\right)  ^{\frac{m}{2}}u\right\Vert
_{\frac{n}{m}}\leq1}{\sup}\int_{%
\mathbb{R}
^{n}}\frac{\phi\left(  \beta\left\vert u\right\vert ^{\frac{n}{n-m}}\right)
dx}{\left\vert x\right\vert ^{\alpha}}dx<\infty\label{1.3}%
\end{equation}
where $\phi(t) =e^{t}- \sum\limits_{j=0}^{j_{\frac{n}{m}}-2} \frac{t^{j}}{j!}%
$. Moreover, when $\beta>\beta_{\alpha,n,m}$, the supremum is infinite.
\end{theorem}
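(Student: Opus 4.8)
The plan is to deduce Theorem 1.3 from the bounded-domain singular Adams inequality (Theorem 1.2, applied with $m$ even to the larger space $W_N^{m,n/m}$) together with a symmetrization/truncation scheme that reduces the unbounded case to a large ball. First I would exploit the equivalence $\|u\|_{W^{m,n/m}}\le\|(-\Delta+I)^{m/2}u\|_{n/m}$ noted in the excerpt, so that the constraint $\|(-\Delta+I)^{m/2}u\|_{n/m}\le 1$ controls \emph{all} intermediate norms $\|\nabla^j u\|_{n/m}$, $0\le j\le m$; in particular it gives a uniform bound on $\|u\|_{n/m}$. The subtracted polynomial $\phi(t)=e^t-\sum_{j=0}^{j_{n/m}-2} t^j/j!$ is precisely designed so that near $u=0$ (where $|u|^{n/(n-m)}$ is small) the integrand behaves like $|u|^{(n/(n-m))(j_{n/m}-1)}$, which because $(n/(n-m))(j_{n/m}-1)\ge n/m$ is dominated by $|u|^{n/m}\in L^1$; since $|x|^{-\alpha}$ with $\alpha<n$ is locally integrable and the low values of $u$ live on a set of finite measure near the origin after scaling, the tail contribution away from a fixed ball is controlled by $\int_{\mathbb{R}^n}\bigl(|u|^{n/m}+|u|^{(n/(n-m))(j_{n/m}-1)}\bigr)|x|^{-\alpha}dx$, which I would bound uniformly using the constraint and a weighted Sobolev (Hardy--Sobolev) embedding.

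Next, on a fixed large ball $B_R$ containing a neighborhood of the origin, I would localize: given $u$ with $\|(-\Delta+I)^{m/2}u\|_{n/m}\le 1$, split $u=v+w$ where $v$ is supported in $B_{2R}$ and agrees with $u$ on $B_R$, arranged so that $\|\nabla^m v\|_{n/m}^{n/m}\le 1+\varepsilon(R)$ with $\varepsilon(R)\to 0$; here one uses that the full $W^{m,n/m}$-norm dominates the lower-order error terms produced by cutting off, and that on a ball $\|\nabla^m v\|_{n/m}$ controls $v$ in $W_0^{m,n/m}(B_{2R})$ (or $W_N^{m,n/m}(B_{2R})$ when $m$ is even). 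Applying Theorem 1.2 on $B_{2R}$ to $v/\|\nabla^m v\|_{n/m}$ with exponent $\beta_{\alpha,n,m}=(1-\alpha/n)\beta_0(n,m)$ — note $\beta_0(n,m)=\beta(n,m)$ since $m$ is even — and absorbing the harmless factor $(1+\varepsilon(R))^{1/(n-m)}\to 1$ by choosing $R$ large (or by taking $\beta$ slightly below the critical value and then a limiting argument), yields $\int_{B_R} e^{\beta|u|^{n/(n-m)}}|x|^{-\alpha}dx\le C$. Combining with the tail estimate from the previous paragraph and using $\phi(t)\le e^t$ gives the finiteness of the supremum in \eqref{1.3}.

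For the sharpness statement ($\beta>\beta_{\alpha,n,m}$ forces the supremum to be infinite), I would use the standard Adams--Moser test functions: the extremal sequence built from fundamental solutions of $\Delta^{m/2}$ (truncated logarithms, i.e.\ Green-function type concentrating profiles) centered at the origin, suitably rescaled so that $\|(-\Delta+I)^{m/2}u_\epsilon\|_{n/m}\le 1$. The lower-order terms $I$ in $(-\Delta+I)^{m/2}$ contribute only a negligible $O(\,\cdot\,)$ correction to $\|\nabla^m u_\epsilon\|_{n/m}$ as the concentration scale $\epsilon\to 0$, so the computation is essentially the one behind the sharpness in Theorem C/Theorem A. Evaluating $\int \phi(\beta|u_\epsilon|^{n/(n-m)})|x|^{-\alpha}dx$ on the ball of radius $\epsilon$ where the functions concentrate, the $|x|^{-\alpha}$ weight produces exactly the factor $(1-\alpha/n)$ improvement in the critical exponent, and for $\beta>(1-\alpha/n)\beta_0(n,m)$ the integral diverges as $\epsilon\to 0$.

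The main obstacle I anticipate is the localization step: transferring the constraint $\|(-\Delta+I)^{m/2}u\|_{n/m}\le 1$ on $\mathbb{R}^n$ to a clean constraint $\|\nabla^m v\|_{n/m}\le 1+o(1)$ for a compactly supported $v$ without losing the sharp constant. One must show that the cutoff errors — both the lower-order derivative terms that appear when passing between $\|(-\Delta+I)^{m/2}\cdot\|$ and $\|\nabla^m\cdot\|$, and the commutator terms from multiplying by a cutoff — are genuinely lower order, which requires the a priori decay/integrability of $u$ and its derivatives at infinity coming from $u\in W^{m,n/m}(\mathbb{R}^n)$, together with a careful choice of the cutoff scale $R$ large and independent of $u$. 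A secondary technical point is the weighted tail estimate near $x=0$ when $0<\alpha<n$: one needs a Hardy--Sobolev type inequality $\int |u|^{q}|x|^{-\alpha}dx\le C\|u\|_{W^{m,n/m}}^{q}$ for the relevant exponent $q=(n/(n-m))(j_{n/m}-1)$, which holds because $q<$ the critical Hardy--Sobolev exponent; verifying this and that the constant is uniform over the constraint set is the remaining routine-but-essential ingredient.
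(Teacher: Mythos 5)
Your proposal takes a genuinely different route from the paper, and the route you chose has a real gap that you yourself flag but do not close. You propose a localization by cutoff: split $u=v+w$ with $v=\chi_R u$ supported in $B_{2R}$ and try to arrange $\|\nabla^m v\|_{n/m}^{n/m}\le 1+\varepsilon(R)$ with $\varepsilon(R)\to0$ ``independent of $u$.'' This cannot be done. The Leibniz commutator terms from $\nabla^m(\chi_R u)$ involve $\|\nabla^j u\|_{L^{n/m}(B_{2R}\setminus B_R)}$ for $j<m$; these tend to $0$ for each \emph{fixed} $u$ as $R\to\infty$, but not uniformly over the constraint ball $\|(-\Delta+I)^{m/2}u\|_{n/m}\le1$ (one can concentrate all of $u$'s mass near $|x|=R$). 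So there is no single $R$ that makes the cutoff error small across the whole admissible set, and the sharp constant $\beta_{\alpha,n,m}$ would be destroyed. A second, related problem: outside any fixed ball you want $\phi(\beta|u|^{n/(n-m)})$ dominated by a power of $|u|$, but that requires $|u|\lesssim 1$ there, and $W^{m,n/m}(\mathbb{R}^n)$ does not embed into $L^\infty$. So the tail estimate is not available for a generic $u$ either.

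The paper removes both obstructions at once by \emph{symmetrization}, not by a cutoff. Writing $m=2k$, it takes compactly supported approximants $u_l$ with $f_l:=(-\Delta+I)^k u_l$, and solves the comparison problem $(-\Delta+I)^k v_l=f_l^\#$ with Navier boundary conditions on $B_{R_l}$. The iterated Trombetti--V\'azquez comparison principle in its weighted form (Proposition 2.4) then gives
\[
\int_{B_{R_l}}\frac{\phi\big(\beta_{\alpha,n,m}|u_l|^{\frac{n}{n-m}}\big)}{|x|^\alpha}\,dx
\le \int_{B_{R_l}}\frac{\phi\big(\beta_{\alpha,n,m}|v_l|^{\frac{n}{n-m}}\big)}{|x|^\alpha}\,dx ,
\]
while $\|(-\Delta+I)^k v_l\|_{n/m}\le 1$ is preserved because $\|f_l^\#\|_{n/m}=\|f_l\|_{n/m}$. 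Now $v_l$ is radial and nonincreasing, so the Radial Lemma (Lemma 2.3) gives $|v_l(x)|\le 1$ for $|x|\ge R_0$ with $R_0$ \emph{uniform in $l$}. This is exactly the uniform-in-$u$ split that your cutoff lacks: the outer piece $I_2$ is then controlled by the polynomial truncation and the $L^{n/m}$ bound, and the inner piece $I_1$ is handled by building (as in Ruf--Sani) an auxiliary $w_l\in W_{N,\mathrm{rad}}^{m,n/m}(B_{R_0})$ with $\|\nabla^m w_l\|_{n/m}\le1$ to which Theorem 1.2 (on the larger Navier space, since $m$ is even) applies. Your sharpness argument via Adams' concentrating test functions is the same as the paper's and is fine. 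But for the upper bound, the missing idea is the symmetrization/comparison step; it is not a ``routine-but-essential'' technicality resolvable by more careful cutoff bookkeeping.
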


Finally, in the special case $n=2m=4,$ we will prove a singular Adams
inequality in the spirit of Theorem G above.

\begin{theorem}
\label{critical} Let $0\leq\alpha<4$. \textit{Assume that }$\tau>0$\textit{
and }$\sigma>0$\textit{ are any two positive constants. }Then for all
$0\leq\beta\leq\beta_{\alpha}=\left(  1-\frac{\alpha}{4}\right)  32\pi^{2}$,
we have%
\begin{equation}
\underset{u\in W^{2,2}\left(
\mathbb{R}
^{4}\right)  ,\int_{%
\mathbb{R}
^{4}}\left(  \left\vert \Delta u\right\vert ^{2}+\tau\left\vert \nabla
u\right\vert ^{2}+\sigma\left\vert u\right\vert ^{2}\right)  \leq1}{\sup}%
\int_{%
\mathbb{R}
^{4}}\frac{\left(  e^{\beta u^{2}}-1\right)  }{\left\vert x\right\vert
^{\alpha}}dx<\infty. \label{1.4}%
\end{equation}
Moreover, when $\beta>\beta_{\alpha}$, the supremum is infinite.
\end{theorem}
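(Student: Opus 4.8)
The plan is to split $\mathbb{R}^{4}=B_{1}\cup(\mathbb{R}^{4}\setminus B_{1})$, where $B_{r}=B_{r}(0)$, and to handle the two pieces by completely different mechanisms. On the exterior the weight is harmless, $|x|^{-\alpha}\le 1$, and the bound is immediate from the (already available) non-singular Adams inequality on $\mathbb{R}^{4}$: since $\tau,\sigma>0$, the constraint $\int_{\mathbb{R}^{4}}(|\Delta u|^{2}+\tau|\nabla u|^{2}+\sigma|u|^{2})\le 1$ is exactly the hypothesis of Theorem G with $m=2$, $a_{0}=1$, $a_{1}=\tau$, $a_{2}=\sigma$, so $\int_{\mathbb{R}^{4}}(e^{32\pi^{2}u^{2}}-1)\,dx\le C$; because $\beta\le\beta_{\alpha}\le 32\pi^{2}=\beta(4,2)$ this controls $\int_{\mathbb{R}^{4}\setminus B_{1}}\frac{e^{\beta u^{2}}-1}{|x|^{\alpha}}\,dx$ uniformly. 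Thus everything reduces to the ball $B_{1}$, where we must exploit the singular Adams inequality on a \emph{bounded} domain (Theorem 1.2 with $n=4$, $m=2$): this is the borderline exponent $n=2m$, for which $\phi(t)=e^{t}-1$ and the sharp constant is $(1-\frac{\alpha}{4})\beta(4,2)=(1-\frac{\alpha}{4})32\pi^{2}=\beta_{\alpha}$, exactly the constant we want; keeping the ball $B_{1}$ \emph{fixed} is what makes the resulting constant a fixed number.

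For the interior estimate I would decompose $u=\phi+h$ on $B_{1}$, where $h\in C^{\infty}(\overline{B_{1}})$ solves the biharmonic Dirichlet problem $\Delta^{2}h=0$ in $B_{1}$, $h=u$ and $\partial_{\nu}h=\partial_{\nu}u$ on $\partial B_{1}$, so $\phi:=u-h\in W_{0}^{2,2}(B_{1})$. Green's identity gives $\int_{B_{1}}\Delta\phi\,\Delta h\,dx=0$ (the boundary integrals vanish since $\phi=\partial_{\nu}\phi=0$ on $\partial B_{1}$ and $\Delta^{2}h=0$), hence the orthogonality relation $\|\Delta\phi\|_{L^{2}(B_{1})}^{2}=\|\Delta u\|_{L^{2}(B_{1})}^{2}-\|\Delta h\|_{L^{2}(B_{1})}^{2}\le\|\Delta u\|_{L^{2}(\mathbb{R}^{4})}^{2}\le 1$. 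Since $\nabla^{2}\phi=\Delta\phi$, $\phi$ is admissible for Theorem 1.2 on $\Omega=B_{1}$, giving $\int_{B_{1}}\frac{e^{\beta_{\alpha}\phi^{2}}}{|x|^{\alpha}}\,dx\le c_{0}$ for a fixed $c_{0}$ — \emph{including the endpoint} $\beta=\beta_{\alpha}$. On the other hand $h$ is biharmonic in $B_{1}$, so interior estimates yield $\|h\|_{L^{\infty}(B_{1/2})}\le C\|h\|_{L^{2}(B_{1})}\le C(\|u\|_{L^{2}(\mathbb{R}^{4})}+\|\phi\|_{L^{2}(B_{1})})\le C(\sigma)$, a constant independent of $u$ (using $\sigma\|u\|_{2}^{2}\le 1$ and $\|\phi\|_{L^{2}(B_{1})}\le C\|\Delta\phi\|_{L^{2}(B_{1})}$).

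It remains to pass from $\phi$ to $u=\phi+h$ on $B_{1/2}$ without losing the constant; this is the main obstacle. When $\beta<\beta_{\alpha}$ it is routine: pick $\varepsilon>0$ with $(1+\varepsilon)\beta\le\beta_{\alpha}$ and use $(\phi+h)^{2}\le(1+\varepsilon)\phi^{2}+(1+\varepsilon^{-1})\|h\|_{\infty}^{2}$ to reduce to the previous display. At the endpoint $\beta=\beta_{\alpha}$ one puts $\delta:=1-\|\Delta\phi\|_{L^{2}(B_{1})}^{2}\ge 0$; applying Theorem 1.2 to $\phi/\|\Delta\phi\|_{L^{2}(B_{1})}$ gives $\int_{B_{1}}\frac{e^{\beta_{\alpha}\phi^{2}/(1-\delta)}}{|x|^{\alpha}}\,dx\le c_{0}$, so the absorption closes whenever $(1+\varepsilon)(1-\delta)\le 1$, i.e. $\delta\gtrsim\varepsilon$. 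The only surviving case is $\delta<c\varepsilon$, which by the orthogonality relation and the constraint forces $\tau\|\nabla u\|_{L^{2}(\mathbb{R}^{4})}^{2}+\sigma\|u\|_{L^{2}(\mathbb{R}^{4})}^{2}<\varepsilon$ and $\|\Delta h\|_{L^{2}(B_{1})}^{2}<\varepsilon$ — a concentration regime in which $u$ is small in $W^{1,2}(\mathbb{R}^{4})$ and nearly biharmonic off its concentration point; one then distinguishes whether the concentration is away from $0$ (handled by the exterior estimate, the singular weight being bounded there) or at $0$ (in which case $u$ is small near $\partial B_{1}$, so the boundary data of $h$ and hence $\|h\|_{L^{\infty}(B_{1/2})}$ are themselves small, and the $(1+\varepsilon)$-absorption again works). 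An alternative route to the interior estimate, parallel to Theorems 1.1--1.3, is to write via Plancherel $u=K*g$ with $g=(\Delta^{2}-\tau\Delta+\sigma)^{1/2}u$, $\|g\|_{2}\le 1$ and $\widehat{K}(\xi)=(|\xi|^{4}+\tau|\xi|^{2}+\sigma)^{-1/2}$; since the roots of $z^{2}+\tau z+\sigma$ have negative real part, $K$ is smooth off the origin, decays exponentially (cf. \cite{RS, LaLu4}), and $K(x)=\frac{1}{4\pi^{2}}|x|^{-2}+O(|\log|x||)$ as $x\to 0$ because $\widehat{|x|^{-2}}=4\pi^{2}|\xi|^{-2}$ in $\mathbb{R}^{4}$, so on $B_{1}$ one has $u=\frac{1}{4\pi^{2}}I_{2}*(g\chi_{B_{2}})+(\text{bounded remainder})$ and applies Theorem 1.1 with $p=p'=2$, $\gamma=2$, using $(1-\frac{\alpha}{4})\frac{4}{\omega_{3}}(4\pi^{2})^{2}=\beta_{\alpha}$ (here $\omega_{3}=2\pi^{2}$).

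For sharpness, assuming $\beta>\beta_{\alpha}$, I would take the standard Adams extremals $u_{\varepsilon}\in C_{0}^{\infty}(B_{1})$ concentrating at $0$, modeled on the fundamental solution $(\sim\log|x|)$ of $\Delta^{2}$ in $\mathbb{R}^{4}$ and normalized so that $\|\Delta u_{\varepsilon}\|_{L^{2}}^{2}\le 1$ and $u_{\varepsilon}\equiv(8\pi^{2})^{-1/2}(\log\tfrac{1}{\varepsilon})^{1/2}$ on $B_{\varepsilon}$. Since $\|\nabla u_{\varepsilon}\|_{L^{2}}^{2}+\|u_{\varepsilon}\|_{L^{2}}^{2}=o(1)$, the full constraint $\int(|\Delta u_{\varepsilon}|^{2}+\tau|\nabla u_{\varepsilon}|^{2}+\sigma u_{\varepsilon}^{2})\le 1$ holds after multiplying by $1+o(1)$, and then $\int_{B_{\varepsilon}}\frac{e^{\beta u_{\varepsilon}^{2}}-1}{|x|^{\alpha}}\,dx\gtrsim\varepsilon^{\,4-\alpha-\beta/(8\pi^{2})}\to\infty$ exactly when $\beta>8\pi^{2}(4-\alpha)=(1-\tfrac{\alpha}{4})32\pi^{2}=\beta_{\alpha}$. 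This is the construction already used for the sharpness of Theorems 1.1--1.3, the only new point being that the lower-order terms $\tau|\nabla u_{\varepsilon}|^{2}+\sigma u_{\varepsilon}^{2}$ are negligible in the limit.
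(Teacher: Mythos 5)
Your strategy is genuinely different from the paper's. The paper reduces to the bounded-domain Theorem 1.2 through Schwarz symmetrization: it replaces $u$ by the solution $v$ of $-\Delta v+cv=f^{\#}$ in a Dirichlet ball, uses the Trombetti--V\'azquez comparison principle (Propositions 2.1--2.4) to show the weighted integral only increases, then exploits the radial monotone decay of $v$ to kill the far field and builds an auxiliary radial function $z_{k}\in W_{N}^{2,2}(B_{R_{0}})$ with $\left\Vert \Delta z_{k}\right\Vert _{2}\le 1$. You instead avoid symmetrization altogether: you split $u=\phi+h$ on a fixed ball, with $\phi\in W_{0}^{2,2}$ and $h$ biharmonic, exploit the $L^{2}$-orthogonality of $\Delta\phi$ and $\Delta h$, apply Theorem 1.2 to $\phi$, absorb the bounded harmonic-like remainder $h$, and cover the far field with the non-singular Theorem G. Your exterior estimate, the orthogonality identity, the verification that $\phi$ is admissible for Theorem 1.2, and the sharpness construction are all correct, and the idea of trading a little of the constraint (the quantity $\delta=1-\left\Vert \Delta\phi\right\Vert _{2}^{2}$) against $\left\Vert h\right\Vert _{\infty}^{2}$ is exactly the right mechanism.

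The one place where the write-up is not yet a proof is the endpoint case $\delta<c\varepsilon$. First, the case split ``concentration at $0$ versus away from $0$'' does not do useful work: the exterior estimate only covers $|x|\ge 1$, not the annulus $\tfrac12<|x|<1$ (this is harmless --- just use $B_{1/2}$ as the inner ball from the start so that $|x|^{-\alpha}\le 2^{\alpha}$ outside --- but it shows the dichotomy as stated is misaligned). More importantly, the assertion that ``$u$ is small near $\partial B_{1}$, so the boundary data of $h$ and hence $\left\Vert h\right\Vert _{L^{\infty}(B_{1/2})}$ are small'' is not established: from $\tau\left\Vert \nabla u\right\Vert _{2}^{2}+\sigma\left\Vert u\right\Vert _{2}^{2}<\delta$ you only get smallness of the $W^{1,2}$-trace (hence of $\left\Vert u\right\Vert _{L^{2}(\partial B_{1})}$), whereas $h$ is the biharmonic extension of the \emph{full} Cauchy data $\left(u|_{\partial B_{1}},\partial_{\nu}u|_{\partial B_{1}}\right)$, and $\left\Vert \partial_{\nu}u\right\Vert$ is governed by the $W^{2,2}$-norm, which is not small. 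What rescues the argument is that you also control $\left\Vert \Delta h\right\Vert _{L^{2}(B_{1})}^{2}\le\delta$ (from the orthogonality and the constraint). Splitting $h=p+q$, where $q$ solves $\Delta q=\Delta h$ with $q=0$ on $\partial B_{1}$ and $p$ is the harmonic extension of $u|_{\partial B_{1}}$, you get $\left\Vert q\right\Vert _{L^{\infty}(B_{1/2})}\lesssim\left\Vert \Delta h\right\Vert _{L^{2}(B_{1})}\lesssim\sqrt{\delta}$ (interior elliptic estimates, $\Delta h$ being harmonic) and $\left\Vert p\right\Vert _{L^{\infty}(B_{1/2})}\lesssim\left\Vert u\right\Vert _{L^{2}(\partial B_{1})}\lesssim\sqrt{\delta}$ (mean value plus trace), so $\left\Vert h\right\Vert _{L^{\infty}(B_{1/2})}^{2}/\delta$ is uniformly bounded and the AM-GM absorption $2\phi h\le\tfrac{\delta}{1-\delta}\phi^{2}+\tfrac{1-\delta}{\delta}h^{2}$ closes without any case distinction on the concentration point. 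With that repair, your route is correct; note also that $h$ has $W^{2,2}(B_{1})$ regularity, not $C^{\infty}(\overline{B_{1}})$, since the boundary data are not smooth (this does not affect the estimates used). In comparison with the paper's symmetrization, your argument is more elementary at the level of machinery but needs this extra dichotomy and elliptic bookkeeping on $h$; the paper's comparison-principle route packages the radial reduction in a way that avoids any case distinction at the cost of invoking Propositions 2.1--2.4.
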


As we can see, when $\alpha=0$, this theorem is already included in Theorem G.
When $0<\alpha<4$, we note that the above inequality (\ref{1.4}) for the
subcritical case $\beta<\beta_{\alpha}=\left(  1-\frac{\alpha}{4}\right)
32\pi^{2}$ was proved in \cite{Y}. However, the critical case $\beta=\left(
1-\frac{\alpha}{4}\right)  32\pi^{2}$ is much harder to prove. Thus, our
Theorem \ref{critical} in the critical case settles a unsolved question
remained in \cite{Y}.

Our paper is organized as follows: In Section 2, we give some preliminaries.
Section 3 deals with the sharp weighted inequality of exponential type for
fractional integrals (Theorem 1.1). The singular Adams inequality for the
bounded domains (Theorem 1.2) will be proved in Section 4. Theorem 1.2 will be
used to prove Theorem 1.3 and Theorem 1.4 in Section 5.

\section{Some preliminaries}

In this section, we provide some preliminaries. For $u\in W^{m,p}\left(
\Omega\right)  $ with $1\leq p<\infty$, we will denote by $\nabla^{j}u$,
$j\in\left\{  1,2,...,m\right\}  $, the $j-th$ order gradient of $u$, namely
\[
\nabla^{j}u=\left\{
\begin{array}
[c]{l}%
\bigtriangleup^{\frac{j}{2}}u\text{ \ \ \ \ \ \ }\mathrm{for}%
\,\,\,j\,\,\mathrm{even}\\
\nabla\bigtriangleup^{\frac{j-1}{2}}u\,\,\ \mathrm{for}\,\,\,j\,\,\mathrm{odd}%
\end{array}
\right.  .
\]

We now introduce the Sobolev space of functions with homogeneous Navier
boundary conditions:%
\[
W_{N}^{m,\frac{n}{m}}\left(  \Omega\right)  :=\left\{  u\in W^{m,\frac{n}{m}%
}\left(  \Omega\right)  :\Delta^{j}u=0\text{ on }\partial\Omega\text{ for
}0\leq j\leq\left[  \frac{m-1}{2}\right]  \right\}  .
\]
It is easy to see that $W_{N}^{m,\frac{n}{m}}\left(  \Omega\right)  $ contains
$W_{0}^{m,\frac{n}{m}}\left(  \Omega\right)  $ as a closed subspace. We also
define
\begin{align*}
W_{rad}^{m,\frac{n}{m}}\left(  B_{R}\right)   &  :=\left\{  u\in W^{m,\frac
{n}{m}}(B_{R}):u(x)=u(\left\vert x\right\vert )\text{ a.e. in }B_{R}\right\}
,\\
W_{N,rad}^{m,\frac{n}{m}}\left(  B_{R}\right)   &  =W_{N}^{m,\frac{n}{m}%
}\left(  B_{R}\right)  \cap W_{rad}^{m,\frac{n}{m}}\left(  B_{R}\right)
\end{align*}
where $B_{R}=\left\{  x\in%
\mathbb{R}
^{n}:\left\vert x\right\vert <R\right\}  $ is a ball in $\mathbb{R}^{n}$.

Next, we will discuss the iterated comparison principle. Let $\Omega$ be a
bounded domain in $%
\mathbb{R}
^{n}$ and $B_{R}$ be an open ball with radius $R>0$ centered at $0$ such that
$\left\vert \Omega\right\vert =\left\vert B_{R}\right\vert $. Let
$u:\Omega\rightarrow%
\mathbb{R}
$ be a measurable function. The distribution function of $u$ is defined by
\[
\mu_{u}(t)=\left\vert \left\{  x\in\Omega|\left\vert u(x)\right\vert
>t\right\}  \right\vert ~\forall t\geq0.
\]
The decreasing rearrangement of $u$ is defined by
\[
u^{\ast}(s)=\inf\left\{  t\geq0:\mu_{u}(t)<s\right\}  \,\, ~\forall
s\in\left[  0,\left\vert B_{R}\right\vert \right]  ,
\]
and the spherically symmetric decreasing rearrangement of $u$ by%
\[
u^{\#}(x)=u^{\ast}\left(  \sigma_{n}\left\vert x\right\vert ^{n}\right)  \,\,
~\forall x\in B_{R}.
\]
We have that $u^{\#}$ is the unique nonnegative integrable function which is
radially symmetric, nonincreasing and has the same distribution function as
$\left\vert u\right\vert .$

Now, we introduce the Trombetti and Vazquez iterated comparision principle
\cite{TV}: let $c>0$ and $u$ be a weak solution of
\begin{equation}
\left\{
\begin{array}
[c]{c}%
-\Delta u+cu=f\text{ in }B_{R}\\
u\in W_{0}^{1,2}\left(  B_{R}\right)
\end{array}
\right.  \label{2.1}%
\end{equation}
where $f\in L^{\frac{2n}{n+2}}\left(  B_{R}\right)  $. We have the following
result that can be found in \cite{TV} (Inequality $(2.20)$):

\begin{proposition}
If $u$ is a nonnegative weak solution of (\ref{2.1}) then
\begin{equation}
-\frac{du^{\ast}}{ds}(s)\leq\frac{s^{\frac{2}{n}-2}}{n^{2}\sigma_{n}^{2/n}}%
{\displaystyle\int\limits_{0}^{s}}
\left(  f^{\ast}-cu^{\ast}\right)  dt,~\forall s\in\left(  0,\left\vert
B_{R}\right\vert \right)  . \label{2.2}%
\end{equation}

\end{proposition}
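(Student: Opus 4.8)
The plan is to carry out the classical Talenti symmetrization argument, keeping the zeroth--order term $cu$ throughout exactly as in \cite{TV}. Fix $t\ge 0$ with $\mu(t):=\mu_{u}(t)>0$, and for small $h>0$ test the weak formulation
\[
\int_{B_{R}}\nabla u\cdot\nabla\varphi\,dx+c\int_{B_{R}}u\varphi\,dx=\int_{B_{R}}f\varphi\,dx
\]
of (\ref{2.1}) with $\varphi=\varphi_{t,h}:=\tfrac1h\min\{(u-t)^{+},h\}\in W_{0}^{1,2}(B_{R})$. Since $u\ge 0$ we have $\{u>t\}=\{|u|>t\}$, and $\nabla\varphi_{t,h}=\tfrac1h\,\nabla u\,\mathbf{1}_{\{t<u<t+h\}}$, so the weak formulation reduces to
\[
\frac1h\int_{\{t<u<t+h\}}|\nabla u|^{2}\,dx=\int_{B_{R}}(f-cu)\,\varphi_{t,h}\,dx.
\]
By the coarea formula the left side equals $\tfrac1h\int_{t}^{t+h}\bigl(\int_{\{u=\tau\}}|\nabla u|\,d\mathcal H^{n-1}\bigr)\,d\tau$, and since $\tau\mapsto\int_{\{u=\tau\}}|\nabla u|\,d\mathcal H^{n-1}$ lies in $L^{1}(0,\infty)$ (its total integral being $\|\nabla u\|_{2}^{2}<\infty$, again by coarea), Lebesgue differentiation shows this tends to $\int_{\{u=t\}}|\nabla u|\,d\mathcal H^{n-1}$ for a.e. $t$. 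On the right side $0\le\varphi_{t,h}\uparrow\mathbf{1}_{\{u>t\}}$ pointwise as $h\downarrow0$, and $f-cu\in L^{1}(B_{R})$ (here $f\in L^{2n/(n+2)}(B_{R})\subset L^{1}$ and $u\in L^{2}\subset L^{1}$ on the bounded set $B_{R}$), so by dominated convergence the right side tends to $\int_{\{u>t\}}(f-cu)\,dx$. Hence, for a.e. $t>0$,
\[
\int_{\{u=t\}}|\nabla u|\,d\mathcal H^{n-1}=\int_{\{u>t\}}(f-cu)\,dx.
\]

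Next I combine this identity with the standard differential and geometric inequalities for the distribution function $\mu$. For a.e. $t$ one has $\mathcal H^{n-1}\bigl(\{u=t\}\cap\{|\nabla u|=0\}\bigr)=0$, the coarea bound $-\mu'(t)\ge\int_{\{u=t\}\cap\{|\nabla u|>0\}}|\nabla u|^{-1}\,d\mathcal H^{n-1}$, and the identification of the reduced boundary $\partial^{*}\{u>t\}$ with $\{u=t\}$ up to an $\mathcal H^{n-1}$--null set, so that the perimeter satisfies $P(\{u>t\})\le\mathcal H^{n-1}\bigl(\{u=t\}\cap\{|\nabla u|>0\}\bigr)$. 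Applying the Cauchy--Schwarz inequality on $\{u=t\}\cap\{|\nabla u|>0\}$ and using the level-set identity above,
\[
\bigl(P(\{u>t\})\bigr)^{2}\le\Bigl(\int_{\{u=t\}}|\nabla u|\,d\mathcal H^{n-1}\Bigr)\Bigl(\int_{\{u=t\}\cap\{|\nabla u|>0\}}|\nabla u|^{-1}\,d\mathcal H^{n-1}\Bigr)\le\Bigl(\int_{\{u>t\}}(f-cu)\,dx\Bigr)\,(-\mu'(t)),
\]
and the isoperimetric inequality $P(\{u>t\})\ge n\sigma_{n}^{1/n}\mu(t)^{(n-1)/n}$ gives
\[
n^{2}\sigma_{n}^{2/n}\,\mu(t)^{2(n-1)/n}\le\Bigl(\int_{\{u>t\}}(f-cu)\,dx\Bigr)\,(-\mu'(t)).
\]
In particular $\int_{\{u>t\}}(f-cu)\,dx>0$ whenever $\mu(t)>0$.

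To finish I pass to rearrangements and change variables. Since $\{u>t\}$ is a super--level set of $u$, a layer-cake computation gives the identity $\int_{\{u>t\}}u\,dx=\int_{0}^{\mu(t)}u^{\ast}(\tau)\,d\tau$, while $\int_{\{u>t\}}f\,dx\le\int_{0}^{\mu(t)}f^{\ast}(\tau)\,d\tau$ by the Hardy--Littlewood inequality; therefore $\int_{\{u>t\}}(f-cu)\,dx\le\int_{0}^{\mu(t)}(f^{\ast}-cu^{\ast})\,d\tau$. Substituting into the last display and dividing (legitimate by the positivity just noted) gives $-\mu'(t)\ge n^{2}\sigma_{n}^{2/n}\mu(t)^{2(n-1)/n}\bigl(\int_{0}^{\mu(t)}(f^{\ast}-cu^{\ast})\,d\tau\bigr)^{-1}$. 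Setting $s=\mu(t)$, i.e. $t=u^{\ast}(s)$, and using the standard fact that $u^{\ast}$ (the generalized inverse of $\mu$) is locally absolutely continuous on the set $\{-\mu'>0\}$ with $-\tfrac{du^{\ast}}{ds}(s)=\bigl(-\mu'(u^{\ast}(s))\bigr)^{-1}$ there --- while on the intervals where $u^{\ast}$ is constant the claimed bound reads $0\le(\text{nonnegative})$ and is trivial --- the above rearranges to
\[
-\frac{du^{\ast}}{ds}(s)\le\frac{1}{n^{2}\sigma_{n}^{2/n}}\cdot\frac{\int_{0}^{s}(f^{\ast}-cu^{\ast})\,d\tau}{s^{2(n-1)/n}}=\frac{s^{\frac2n-2}}{n^{2}\sigma_{n}^{2/n}}\int_{0}^{s}(f^{\ast}-cu^{\ast})\,d\tau,
\]
which is exactly (\ref{2.2}). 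I expect the main obstacle to be the measure-theoretic bookkeeping concentrated in the second paragraph and in the change of variables: the rectifiability of the level sets $\{u=t\}$, the identifications $P(\{u>t\})=\mathcal H^{n-1}(\{u=t\})$ and $\mathcal H^{n-1}(\{u=t\}\cap\{|\nabla u|=0\})=0$ for a.e. $t$, the coarea expression for $-\mu'$, and the precise a.e. relation between the differentiability of $\mu$ and that of $u^{\ast}$. These are classical facts in the theory of rearrangements of Sobolev functions (see \cite{TV} and the references therein); granting them, the chain of inequalities above is forced.
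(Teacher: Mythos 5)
First, a point of comparison: the paper gives no proof of this proposition at all --- it simply quotes inequality (2.20) of \cite{TV} --- so what you have written is a reconstruction of the Trombetti--V\'azquez/Talenti symmetrization argument, and in outline it is the right one: the truncation test function $\varphi_{t,h}$, the level-set identity $\int_{\{u=t\}}|\nabla u|\,d\mathcal H^{n-1}=\int_{\{u>t\}}(f-cu)\,dx$ for a.e.\ $t$, Cauchy--Schwarz on the level set combined with the isoperimetric inequality, and Hardy--Littlewood together with the exact identity $\int_{\{u>t\}}u\,dx=\int_0^{\mu(t)}u^{\ast}\,d\tau$ (this is precisely where $u\ge0$ and $c>0$ enter, and you use them correctly). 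Two steps, however, are asserted rather than proved, and they are exactly the ones that are specific to this statement rather than generic rearrangement folklore. The first is your parenthetical ``$0\le$ (nonnegative)'' on the flat intervals of $u^{\ast}$: there you need $\int_0^s(f^{\ast}-cu^{\ast})\,d\tau\ge0$, but such an $s$ lies strictly inside a jump interval $(\mu(t_0),\mu(t_0^-))$ of $\mu$ and is \emph{not} of the form $\mu(t)$, so your earlier observation that $\int_0^{\mu(t)}(f^{\ast}-cu^{\ast})\ge\int_{\{u>t\}}(f-cu)>0$ does not cover it, and the claim is not obvious ($f^{\ast}$ may well drop below $ct_0$ on that interval). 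It is true and can be repaired cheaply: on $(\mu(t_0),\mu(t_0^-))$ one has $u^{\ast}\equiv t_0$, so $s\mapsto\int_0^s(f^{\ast}-cu^{\ast})$ has nonincreasing derivative $f^{\ast}(s)-ct_0$ there, i.e.\ it is concave on the interval; since it is nonnegative at both endpoints (approximate $\mu(t_0)$ and $\mu(t_0^-)$ by $\mu(t_k)$ with $t_k\downarrow t_0$, resp.\ $t_k\uparrow t_0$, running through the a.e.\ good levels, and use continuity of the integral), it is nonnegative on the whole interval. Without some such argument this case of the proof is genuinely open.

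The second soft spot is the inversion step ``$u^{\ast}$ is locally absolutely continuous on $\{-\mu'>0\}$ with $-\frac{du^{\ast}}{ds}(s)=(-\mu'(u^{\ast}(s)))^{-1}$''. This is not a clean citable fact: $\mu$ may carry a nontrivial singular part, and differentiability of $u^{\ast}$ at $s$ does not by itself yield differentiability of $\mu$ at $u^{\ast}(s)$ nor the reciprocal relation. The standard way to make this rigorous (Talenti \cite{Tal}, \cite{TV}) is to avoid pointwise inversion altogether: rewrite your estimate as $1\le(-\mu'(t))\,n^{-2}\sigma_n^{-2/n}\mu(t)^{\frac2n-2}\int_0^{\mu(t)}(f^{\ast}-cu^{\ast})\,d\tau$ for a.e.\ $t$, integrate over $t\in[u^{\ast}(s_2),u^{\ast}(s_1)]$, and use the change-of-variables inequality for monotone functions (which again requires the nonnegativity of $\int_0^r(f^{\ast}-cu^{\ast})$ established above) to get $u^{\ast}(s_1)-u^{\ast}(s_2)\le n^{-2}\sigma_n^{-2/n}\int_{s_1}^{s_2}r^{\frac2n-2}\bigl(\int_0^r(f^{\ast}-cu^{\ast})\,d\tau\bigr)dr$ for $0<s_1<s_2$; this gives local absolute continuity of $u^{\ast}$ and the a.e.\ derivative bound (\ref{2.2}) (the ``$\forall s$'' in the statement must in any case be read a.e., which is all that is used later). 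With these two repairs your argument is complete and coincides with the proof in the cited reference.
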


Now, we consider the problem
\begin{equation}
\left\{
\begin{array}
[c]{c}%
-\Delta v+cv=f^{\#}\text{ in }B_{R}\\
v\in W_{0}^{1,2}\left(  B_{R}\right)
\end{array}
\right.  \label{2.3}%
\end{equation}
Due to the radial symmetry of the equation, the unique solution $v$ of
(\ref{2.3}) is radially symmetric and we have
\begin{equation}
\label{identity}-\frac{d\widehat{v}}{ds}(s)=\frac{s^{\frac{2}{n}-2}}%
{n^{2}\sigma_{n}^{2/n}}%
{\displaystyle\int\limits_{0}^{s}}
\left(  f^{\ast}-c\widehat{v}\right)  dt,~\forall s\in\left(  0,\left\vert
B_{R}\right\vert \right)
\end{equation}
where $\widehat{v}\left(  \sigma_{n}\left\vert x\right\vert ^{n}\right)
:=v(x)$. We have the following comparison of integrals in balls that again can
be found in \cite{TV}:

\begin{proposition}
Let $u,~v$ be weak solutions of (\ref{2.1}) and (\ref{2.3}) respectively. For
every $r\in\left(  0,R\right)  $ we have%
\[
\int_{B_{r}}u^{\#}dx\leq\int_{B_{r}}vdx.
\]
and for every convex nondecreasing function $\phi:\left[  0,+\infty\right)
\rightarrow\left[  0,+\infty\right)  $ we have%
\[
\int_{B_{r}}\phi\left(  \left\vert u\right\vert \right)  dx\leq\int_{B_{r}%
}\phi\left(  \left\vert v\right\vert \right)  dx.
\]

\end{proposition}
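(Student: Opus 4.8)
The plan is to run the classical symmetrization comparison of Talenti, modified to accommodate the zeroth order term $cu$, starting from the two relations \eqref{2.2} and \eqref{identity}. First I would pass to one dimension: since $u^{\#}$ and $v$ are radial, the substitution $s=\sigma_{n}|x|^{n}$ gives, writing $\ell:=|B_{R}|$,
\[
\int_{B_{r}}u^{\#}\,dx=\int_{0}^{\sigma_{n}r^{n}}u^{\ast}(s)\,ds ,\qquad
\int_{B_{r}}v\,dx=\int_{0}^{\sigma_{n}r^{n}}\widehat{v}(s)\,ds ,
\]
so the first inequality amounts to showing $\int_{0}^{\rho}u^{\ast}\,ds\le\int_{0}^{\rho}\widehat{v}\,ds$ for every $\rho\in(0,\ell)$.

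To prove this, I would set $\kappa(s):=\frac{s^{2/n-2}}{n^{2}\sigma_{n}^{2/n}}$, $U(s):=\int_{0}^{s}u^{\ast}\,dt$, $\widehat{V}(s):=\int_{0}^{s}\widehat{v}\,dt$ and $\psi:=U-\widehat{V}$. Subtracting \eqref{identity} from \eqref{2.2}, the terms involving $f^{\ast}$ cancel and the remaining terms rearrange to the differential inequality
\[
\psi''(s)\ \ge\ c\,\kappa(s)\,\psi(s)\qquad\text{a.e. on }(0,\ell),
\]
together with $\psi(0)=0$. Using that $u^{\ast}$ is locally absolutely continuous on $(0,\ell)$, one has $\psi\in C^{1}(0,\ell)\cap C([0,\ell])$ with $\psi'=u^{\ast}-\widehat{v}$; moreover, since $u$ and $v$ vanish on their boundaries, $u^{\ast}(s),\widehat{v}(s)\to 0$ as $s\to\ell^{-}$, so $\psi'(\ell^{-})=0$. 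Then I would argue by contradiction: if $\psi>0$ somewhere, pick a connected component $(a,b)$ of the open set $\{\psi>0\}$, so $\psi(a)=0$ (either $a=0$ or by continuity) and, when $b<\ell$, also $\psi(b)=0$. On $(a,b)$ we have $\psi''\ge c\,\kappa\,\psi>0$, hence $\psi$ is strictly convex there. If $b<\ell$, strict convexity together with $\psi(a)=\psi(b)=0$ forces $\psi<0$ on $(a,b)$, a contradiction; if $b=\ell$, then $\psi'$ is strictly increasing on $(a,\ell)$ with $\psi'(\ell^{-})=0$, so $\psi'<0$ and $\psi$ is strictly decreasing on $(a,\ell)$, and with $\psi(a)=0$ this again gives $\psi<0$, a contradiction. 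Hence $\psi\le 0$ on $(0,\ell)$, which is the first inequality.

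The inequality with a convex nondecreasing $\phi$ I would then deduce from the first by a one dimensional majorization argument. Subtracting the constant $\phi(0)$ (which shifts both sides by the same amount) reduces matters to $\phi(0)=0$, and then $\phi(t)=\phi'(0^{+})\,t+\int_{0}^{\infty}(t-\lambda)_{+}\,d\nu(\lambda)$ for a nonnegative measure $\nu$; by linearity it suffices to treat $\phi(t)=t$, which is exactly the inequality just proved, and $\phi(t)=(t-\lambda)_{+}$, $\lambda>0$. For the latter I would use that, for any nonincreasing nonnegative function $g$,
\[
\int_{0}^{\rho}(g-\lambda)_{+}\,ds=\max_{0\le\rho'\le\rho}\Big(\int_{0}^{\rho'}g\,ds-\lambda\rho'\Big),
\]
so that the inequality $\int_{0}^{\rho'}u^{\ast}\le\int_{0}^{\rho'}\widehat{v}$ (valid for every $\rho'$) passes to the truncations; undoing the substitution $s=\sigma_{n}|x|^{n}$ then yields $\int_{B_{r}}\phi(u^{\#})\,dx\le\int_{B_{r}}\phi(v)\,dx$, which is the asserted bound (recall $v=v^{\#}\ge0$).

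I expect the main obstacle to be the middle step: making the passage from the differential inequality to the sign of $\psi$ fully rigorous, chiefly justifying the absolute continuity of $u^{\ast}$ on $(0,\ell)$ so that $\psi''$ is meaningful and \eqref{2.2} can be integrated freely, and identifying the limiting values of $u^{\ast}$ and $\widehat{v}$ at the endpoint $s=\ell$. Once these points are secured, the convexity argument above is short, and the concluding majorization step is routine.
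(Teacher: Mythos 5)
Your argument is essentially the Trombetti--V\'azquez comparison that the paper does not reproduce for this statement (Proposition 2.2 is simply quoted from \cite{TV}); it is also the same mechanism the paper does write out for the polyharmonic analogue (Proposition 2.3), where $y(s)=\int_0^s(\widehat v-u^\ast)\,dt$ satisfies $y''-\frac{cs^{2/n-2}}{n^2\sigma_n^{2/n}}y\le 0$, $y(0)=y'(|B_R|)=0$, and the maximum principle gives $y\ge 0$; your $\psi=-y$ with the explicit convexity/contradiction argument is that proof with the sign reversed, so in substance you have reconstructed the cited result correctly. Two steps should be added to close it. First, inequality \eqref{2.2} (Proposition 2.1) is stated only for \emph{nonnegative} solutions, so before subtracting \eqref{identity} from \eqref{2.2} you must reduce to $u\ge 0$: compare $|u|$ with the solution $\overline u$ of $-\Delta\overline u+c\overline u=|f|$ in $B_R$, $\overline u\in W_0^{1,2}(B_R)$, for which the maximum principle gives $\overline u\ge|u|$, hence $u^\ast\le\overline u^{\,\ast}$, while $|f|^{\#}=f^{\#}$ leaves problem \eqref{2.3} unchanged --- exactly the reduction the paper performs inside the proof of Proposition 2.3. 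Second, your majorization yields $\int_{B_r}\phi(u^{\#})\,dx\le\int_{B_r}\phi(v)\,dx$, whereas the stated inequality involves $\phi(|u|)$; you need the one-line Hardy--Littlewood step $\int_{B_r}\phi(|u|)\,dx\le\int_{B_r}\phi(u^{\#})\,dx$, which holds because $\phi$ is nondecreasing (so $(\phi(|u|))^{\#}=\phi(u^{\#})$) and $\chi_{B_r}$ is already symmetric decreasing. The technical point you flag is real: the a.e.\ inequality $\psi''\ge c\kappa\psi$ only controls the absolutely continuous part of $(u^\ast)'$, and the singular part has the unfavorable sign, so local absolute continuity of $u^\ast$ must be invoked; this is standard (e.g.\ via P\'olya--Szeg\H{o}, $u^{\#}\in W_0^{1,2}(B_R)$), as are the endpoint facts $u^\ast(|B_R|^-)=\widehat v(|B_R|^-)=0$ that give $\psi'(|B_R|^-)=0$ (the paper uses the same boundary condition without comment in Proposition 2.3). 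Finally, the parenthetical claim $v=v^{\#}$ is neither justified nor needed: as your truncation identity shows, only the monotonicity of $u^\ast$ enters, so you can simply drop it.
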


Next, we adapt the comparison principle to the polyharmonic operator. Let
$u\in W^{m,2}\left(  B_{R}\right)  $ be a weak solution of
\begin{equation}
\left\{
\begin{array}
[c]{c}%
\left(  -\Delta+cI\right)  ^{k}u=f\text{ in }B_{R}\\
u\in W_{N}^{2k,2}\left(  B_{R}\right)
\end{array}
\right.  \label{2.4}%
\end{equation}
where $m=2k$ and $f\in L^{\frac{2n}{n+2}}\left(  B_{R}\right)  $. If we
consider the problem
\begin{equation}
\left\{
\begin{array}
[c]{c}%
\left(  -\Delta+cI\right)  ^{k}v=f^{\#}\text{ in }B_{R}\\
v\in W_{N}^{2k,2}\left(  B_{R}\right)
\end{array}
\right.  \label{2.5}%
\end{equation}
then we have the following comparison of integrals in balls:

\begin{proposition}
Let $u,~v$ be weak solutions of the polyharmonic problems (\ref{2.4}) and
(\ref{2.5}) respectively. Then for every $r\in\left(  0,R\right)  $ we have%
\[
\int_{B_{r}}u^{\#}dx\leq\int_{B_{r}}vdx.
\]

\end{proposition}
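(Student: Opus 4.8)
The plan is to factor the $2k$-th order Navier problem into an iterated chain of second order Dirichlet problems and then push the ball-wise mass comparison up the chain, using the comparison of integrals in balls already proved for the single second-order problem \eqref{2.1}--\eqref{2.3}, together with a one-step comparison lemma for radial solutions of $-\Delta+cI$.

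First I would set up the decomposition. Write $m=2k$, and given $u\in W_N^{2k,2}(B_R)$ solving $(-\Delta+cI)^ku=f$, put $u_0=u$ and $u_i=(-\Delta+cI)^iu$ for $i=0,\dots,k$, so that $u_k=f$ and $-\Delta u_{i-1}+cu_{i-1}=u_i$ in $B_R$ for $1\le i\le k$. Since $u$ satisfies the Navier conditions $\Delta^j u=0$ on $\partial B_R$ for $0\le j\le k-1$, expanding $(-\Delta+cI)^i$ shows by induction that $u_i=0$ on $\partial B_R$ for $0\le i\le k-1$; hence each such $u_i$ lies in $W_0^{1,2}(B_R)$ and is the weak solution of the second order Dirichlet problem with datum $u_{i+1}$ (interior elliptic regularity provides whatever smoothness is needed to run the iteration and the ODE manipulations below). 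Because $f^\#$ is radial, uniqueness forces the analogous chain $v_0=v,\dots,v_k=f^\#$ to consist of radial functions in $W_0^{1,2}(B_R)$. As in the applications we may assume $f\ge0$; the maximum principle for $-\Delta+cI$ ($c>0$) then gives $u_i\ge0$ and $v_i\ge0$ for all $i$, so all rearrangements occurring below are rearrangements of the functions themselves.

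Next I would prove, by downward induction on $i$ from $k$ to $0$, the statement $(\star_i)$: $\int_{B_r}(u_i)^\#\,dx\le\int_{B_r}v_i\,dx$ for every $r\in(0,R)$. The base case $i=k$ is an equality, since rearrangement preserves the distribution function, so $(u_k)^\#=f^\#=v_k$ pointwise. For the inductive step assume $(\star_{i+1})$, and let $w$ be the radial weak solution in $W_0^{1,2}(B_R)$ of $-\Delta w+cw=(u_{i+1})^\#$. Applying the comparison principle for the single second-order problem to the pair $(u_i,w)$ --- here $u_i$ solves \eqref{2.1} with datum $u_{i+1}$ and $w$ solves \eqref{2.3} with the same datum --- gives $\int_{B_r}(u_i)^\#\le\int_{B_r}w$ for all $r$. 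It remains to compare the two radial functions $w$ and $v_i$, solving $-\Delta w+cw=(u_{i+1})^\#$ and $-\Delta v_i+cv_i=v_{i+1}$, knowing from $(\star_{i+1})$ that $\int_{B_r}(u_{i+1})^\#\le\int_{B_r}v_{i+1}$ for all $r$; this is handled by the Lemma below, whose conclusion $\int_{B_r}w\le\int_{B_r}v_i$ closes the step. At $i=0$ the statement $(\star_0)$ is exactly the assertion of the Proposition.

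\emph{One-step radial comparison Lemma.} If $g,h\ge0$ are radial on $B_R$ with $\int_{B_r}g\le\int_{B_r}h$ for all $r\in(0,R)$, and $w,z\in W_0^{1,2}(B_R)$ are the radial solutions of $-\Delta w+cw=g$, $-\Delta z+cz=h$, then $\int_{B_r}w\le\int_{B_r}z$ for all $r$. I would prove this by passing to $s=\sigma_n|x|^n$ as in \eqref{identity}: with $W(s)=\int_{B_{\rho(s)}}w\,dx$ (and $Z,G,H$ likewise), integrating the equation over $B_\rho$ and using $\omega_{n-1}=n\sigma_n$ gives $-n^2\sigma_n^{2/n}s^{2-2/n}W''(s)=G(s)-cW(s)$, i.e.
\[
W''-c\,b(s)W=-b(s)G,\qquad b(s)=\frac{s^{2/n-2}}{n^2\sigma_n^{2/n}}>0,
\]
and the same for $Z$. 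Hence $D:=Z-W$ satisfies $D''-c\,b(s)D=b(s)(G-H)\le0$ on $(0,|B_R|)$, with $D(0)=0$ and $D'(|B_R|)=\hat z(|B_R|)-\hat w(|B_R|)=0$. Since the zeroth order coefficient $c\,b(s)$ is nonnegative, the maximum principle applies: a negative interior minimum $s^*$ would give $D'(s^*)=0$, $D''(s^*)\ge0$, contradicting $D''(s^*)\le c\,b(s^*)D(s^*)<0$; a negative minimum at $s^*=|B_R|$ would give $D''<0$ near $s^*$ which with $D'(s^*)=0$ makes $D$ strictly increasing just left of $s^*$, again contradicting minimality. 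So $D\ge0$, which is the claim. The step I expect to be the main obstacle is precisely this Lemma: getting the signs right in the reduction so that the operator $-d^2/ds^2+c\,b(s)$ has a nonnegative zeroth order term and the mixed (Dirichlet at $0$, Neumann at $|B_R|$) maximum principle is available, and justifying the pointwise ODE identities through the regularity of the radial solutions. The factorization of the Navier polyharmonic problem into a chain of Dirichlet problems, though standard, also needs to be stated with care — it is exactly here that the hypothesis "$m$ even" enters.
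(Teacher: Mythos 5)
Your proof is correct, and it follows the same overall strategy as the paper: rewrite the Navier problem \eqref{2.4} (and \eqref{2.5}) as a chain of second order Dirichlet problems for $-\Delta+cI$, and propagate the ball-wise mass comparison along the chain by induction, with the crucial step being an ODE inequality of the form $y''-c\,b(s)\,y\le 0$, $y(0)=y'(|B_R|)=0$, killed by the maximum principle. The difference is in how the inductive step is organized. The paper works directly with the pair $(u_{i+1},v_{i+1})$: it first reduces to $u_{i+1}\ge 0$ (needed because the Trombetti--V\'azquez inequality \eqref{2.2} of Proposition 2.1 is stated for nonnegative solutions), then subtracts \eqref{2.2} from the identity \eqref{identity} and invokes the induction hypothesis to get the differential inequality for $y(s)=\int_0^s(\widehat v_{i+1}-u_{i+1}^{\ast})\,dt$. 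You instead insert an auxiliary radial solution $w$ of $-\Delta w+cw=(u_{i+1})^{\#}$, quote Proposition 2.2 for the nonradial-versus-radial half (the pair $(u_i,w)$ is literally in the setting \eqref{2.1}--\eqref{2.3} with datum $u_{i+1}$), and handle the radial-versus-radial half with your one-step lemma, whose proof ($D=Z-W$, $D''-cbD\le 0$, $D(0)=D'(|B_R|)=0$) is the same computation as the paper's, just packaged as a standalone statement about radial data ordered in the ball-wise mass sense. What your arrangement buys is modularity (the ODE argument becomes a reusable lemma, applied with a radial but not necessarily nonincreasing datum, exactly as the paper implicitly uses \eqref{identity}) and, incidentally, it makes your opening reduction to $f\ge 0$ unnecessary: since $\#$ and $\ast$ are rearrangements of absolute values and Proposition 2.2 carries no sign hypothesis, your induction runs verbatim for sign-changing data, whereas the paper must perform the $\overline u_{i+1}\ge|u_{i+1}|$ comparison at each step. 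What the paper's arrangement buys is directness: one application of Proposition 2.1 per step and no auxiliary boundary value problem. Your explicit treatment of the mixed Dirichlet--Neumann maximum principle (including the endpoint case $s^{*}=|B_R|$) and of why $u_i\in W_0^{1,2}(B_R)$ under the Navier conditions is more detailed than the paper's, and the remaining regularity points you defer to elliptic theory are at the same level of rigor as the original.
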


\begin{proof}
The proof adapts the comparison principle as in \cite{TV} and \cite{RS}. We
include a proof for its completeness. Since equations in (\ref{2.4}) and
(\ref{2.5}) are considered with homogeneous Navier boundary conditions, they
may be rewritten as second order systems:%

\begin{align*}
(P1)\left\{
\begin{array}
[c]{c}%
-\Delta u_{1}+cu_{1}=f\text{ in }B_{R}\\
u_{1}\in W_{0}^{1,2}\left(  B_{R}\right)
\end{array}
\right.  ~\ (Pi)\left\{
\begin{array}
[c]{c}%
-\Delta u_{i}+cu_{i}=u_{i-1}\text{ in }B_{R}\\
u_{i}\in W_{0}^{1,2}\left(  B_{R}\right)
\end{array}
\right.  ~i  &  \in\left\{  2,3,...,k\right\} \\
(Q1)\left\{
\begin{array}
[c]{c}%
-\Delta v_{1}+cv_{1}=f^{\#}\text{ in }B_{R}\\
v_{1}\in W_{0}^{1,2}\left(  B_{R}\right)
\end{array}
\right.  ~\ (Qi)\left\{
\begin{array}
[c]{c}%
-\Delta v_{i}+cv_{i}=v_{i-1}\text{ in }B_{R}\\
v_{i}\in W_{0}^{1,2}\left(  B_{R}\right)
\end{array}
\right.  ~i  &  \in\left\{  2,3,...,k\right\}
\end{align*}
where $u_{k}=u$ and $v_{k}=v$. Thus we have to prove that for every
$r\in\left(  0,R\right)  $%
\begin{equation}
\int_{B_{r}}u_{k}^{\#}dx\leq\int_{B_{r}}v_{k}dx. \label{2.6}%
\end{equation}
By the above proposition (Proposition 2.2), we have
\[
\int_{B_{r}}u_{1}^{\#}dx\leq\int_{B_{r}}v_{1}dx.
\]
Now, if we assume that
\[
\int_{B_{r}}u_{j}^{\#}dx\leq\int_{B_{r}}v_{j}dx\text{ for all }j=1,...,i,
\]
we will prove that
\[
\int_{B_{r}}u_{i+1}^{\#}dx\leq\int_{B_{r}}v_{i+1}dx.
\]
With no loss of generality, we may assume that $u_{i+1}\geq0$. In fact, let
$\overline{u}_{i+1}$ be a weak solution of
\[
\left\{
\begin{array}
[c]{c}%
-\Delta\overline{u}_{i+1}+c\overline{u}_{i+1}=\left\vert u_{i}\right\vert
\text{ in }B_{R}\\
\overline{u}_{i+1}\in W_{0}^{1,2}\left(  B_{R}\right)
\end{array}
\right.
\]
then the maximum principle implies that $\overline{u}_{i+1}\geq0$ and
$\overline{u}_{i+1}\geq\left\vert u_{i+1}\right\vert $ in $B_{R}$.

Since $u_{i+1}$ is a nonnegative weak solution of $(P\left(  i+1\right)  )$
and $v_{i+1}$ is a nonnegative weak solution of $(Q\left(  i+1\right)  )$,
then by Proposition 2.1 we have%
\begin{align*}
-\frac{du_{i+1}^{\ast}}{ds}(s)  &  \leq\frac{s^{\frac{2}{n}-2}}{n^{2}%
\sigma_{n}^{2/n}}%
{\displaystyle\int\limits_{0}^{s}}
\left(  u_{i}^{\ast}-cu_{i+1}^{\ast}\right)  dt,~\forall s\in\left(
0,\left\vert B_{R}\right\vert \right)  ,\\
-\frac{d\widehat{v}_{i+1}}{ds}(s)  &  =\frac{s^{\frac{2}{n}-2}}{n^{2}%
\sigma_{n}^{2/n}}%
{\displaystyle\int\limits_{0}^{s}}
\left(  \widehat{v}_{i}-c\widehat{v}_{i+1}\right)  dt,~\forall s\in\left(
0,\left\vert B_{R}\right\vert \right)
\end{align*}
Thus for all $s\in\left(  0,\left\vert B_{R}\right\vert \right)  $, we have
\[
\frac{d\widehat{v}_{i+1}}{ds}(s)-\frac{du_{i+1}^{\ast}}{ds}(s)-\frac
{s^{\frac{2}{n}-2}}{n^{2}\sigma_{n}^{2/n}}%
{\displaystyle\int\limits_{0}^{s}}
\left(  c\widehat{v}_{i+1}-cu_{i+1}^{\ast}\right)  dt\leq\frac{s^{\frac{2}%
{n}-2}}{n^{2}\sigma_{n}^{2/n}}%
{\displaystyle\int\limits_{0}^{s}}
\left(  u_{i}^{\ast}-\widehat{v}_{i}\right)  dt.
\]
Thanks to the induction hypotheses, we get that
\[%
{\displaystyle\int\limits_{0}^{s}}
\left(  u_{i}^{\ast}-\widehat{v}_{i}\right)  dt\leq0~,\forall s\in\left(
0,\left\vert B_{R}\right\vert \right)
\]
and then
\[
\frac{d\widehat{v}_{i+1}}{ds}(s)-\frac{du_{i+1}^{\ast}}{ds}(s)-\frac
{s^{\frac{2}{n}-2}}{n^{2}\sigma_{n}^{2/n}}%
{\displaystyle\int\limits_{0}^{s}}
\left(  c\widehat{v}_{i+1}-cu_{i+1}^{\ast}\right)  dt\leq0.
\]
Setting
\[
y(s)=%
{\displaystyle\int\limits_{0}^{s}}
\left(  \widehat{v}_{i+1}-u_{i+1}^{\ast}\right)  dt~~\forall s\in\left(
0,\left\vert B_{R}\right\vert \right)
\]
we get
\[
\left\{
\begin{array}
[c]{c}%
y^{\prime\prime}-\frac{cs^{\frac{2}{n}-2}}{n^{2}\sigma_{n}^{2/n}}%
y\leq0,~\forall s\in\left(  0,\left\vert B_{R}\right\vert \right) \\
y(0)=y^{\prime}(\left\vert B_{R}\right\vert )=0
\end{array}
\right.  .
\]
By maximum principle, we have that $y\geq0$ which is what we need.
\end{proof}

From the above proposition, we have the following corollary:

\begin{corollary}
Let $u,~v$ be weak solutions of the polyharmonic problems (\ref{2.4}) and
(\ref{2.5}) respectively. Then for every convex nondecreasing function
$\phi:\left[  0,+\infty\right)  \rightarrow\left[  0,+\infty\right)  $ we have%
\[
\int_{B_{r}}\phi\left(  \left\vert u\right\vert \right)  dx\leq\int_{B_{r}%
}\phi\left(  \left\vert v\right\vert \right)  dx.
\]

\end{corollary}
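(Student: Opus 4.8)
The plan is to deduce Corollary 2.5 from Proposition 2.4 in the same way that an integral comparison on balls is traditionally upgraded to a family of inequalities: via decreasing rearrangements together with the classical Hardy--Littlewood--P\'olya majorization principle. The one preliminary observation needed is that, by the maximum principle and the radial symmetry of the symmetrized equation in (\ref{2.5}), the solution $v$ is not merely radial but nonnegative and radially nonincreasing; writing $v(x)=\widehat v(\sigma_n|x|^n)$ as in the proof of Proposition 2.4, the profile $\widehat v$ is nonnegative and nonincreasing on $(0,|B_R|)$, so that $v=v^{\#}$ and $\widehat v=v^{\ast}$. (This monotonicity is part of the Talenti--Trombetti--V\'azquez framework and is already implicit in the argument for Proposition 2.4, where the identity (\ref{identity}) is exploited.)

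First I would fix $r\in(0,R)$ and a convex nondecreasing $\phi:[0,\infty)\to[0,\infty)$, and assume $\int_{B_r}\phi(|v|)\,dx<\infty$ (otherwise there is nothing to prove). A routine rearrangement argument — comparing, level by level, the superlevel sets $\{x\in B_r:\phi(|u(x)|)>t\}$ with $\{x\in B_r:\phi(u^{\#}(x))>t\}$ and using that $\phi$ is nondecreasing and nonnegative — gives $\int_{B_r}\phi(|u|)\,dx\le\int_{B_r}\phi(u^{\#})\,dx$, while $v=v^{\#}\ge 0$ yields $\int_{B_r}\phi(|v|)\,dx=\int_{B_r}\phi(v)\,dx$. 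Hence it is enough to prove $\int_{B_r}\phi(u^{\#})\,dx\le\int_{B_r}\phi(v)\,dx$, which, after the measure-preserving change of variables $s=\sigma_n|x|^n$ (so that $|B_r|=\sigma_n r^n=:a$), becomes the one-dimensional statement $\int_0^{a}\phi(u^{\ast}(s))\,ds\le\int_0^{a}\phi(\widehat v(s))\,ds$.

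To obtain this, I would invoke Proposition 2.4: for every $\rho\in(0,R)$ one has $\int_{B_\rho}u^{\#}\,dx\le\int_{B_\rho}v\,dx$, which in the variable $s$ reads $\int_0^{\sigma}u^{\ast}(s)\,ds\le\int_0^{\sigma}\widehat v(s)\,ds$ for all $\sigma\in(0,|B_R|)$. Since $u^{\ast}$ and $\widehat v$ are both nonincreasing, this is exactly the statement that $u^{\ast}$ is weakly majorized by $\widehat v$ on $(0,|B_R|)$, hence also on the subinterval $(0,a)$. By the Hardy--Littlewood--P\'olya inequality, weak majorization implies $\int_0^{a}\phi(u^{\ast})\,ds\le\int_0^{a}\phi(\widehat v)\,ds$ for every convex nondecreasing $\phi$. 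Reversing the change of variables and combining with the reductions of the previous paragraph gives $\int_{B_r}\phi(|u|)\,dx\le\int_{B_r}\phi(|v|)\,dx$, as desired.

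The only point that requires care is the step where a single integral comparison is promoted to the full family indexed by convex functions: this is precisely the content of the Hardy--Littlewood--P\'olya majorization theorem, and it is here that the radial monotonicity of $v$ is used — it guarantees that restricting $v$ to the smaller ball $B_r$ does not disturb the ordering $\int_0^{\sigma}u^{\ast}\le\int_0^{\sigma}\widehat v$ of the partial integrals, which is what lets one carry out the argument on $(0,a)$ rather than only on $(0,|B_R|)$. Everything else is bookkeeping with decreasing rearrangements.
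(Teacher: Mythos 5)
Your argument is correct in outline and is essentially the argument the paper has in mind (the paper states the corollary immediately after the polyharmonic integral comparison with no further explanation, so the ``official'' proof is exactly the standard promotion of a partial-mass comparison to a comparison of convex integrands). The reduction to the one-dimensional statement $\int_0^a\phi(u^\ast)\le\int_0^a\phi(\widehat v)$ via Hardy--Littlewood on $B_r$, and the appeal to the Hardy--Littlewood--P\'olya weak-majorization theorem, is the right route, and your observation that this is the step where radial monotonicity of $v$ enters is exactly the crux: without $\widehat v$ nonincreasing, the hypothesis $\int_0^\sigma u^\ast\le\int_0^\sigma\widehat v$ does not imply the majorization $\int_0^\sigma u^\ast\le\int_0^\sigma(\widehat v|_{(0,a)})^\ast$ that HLP actually needs when $a<|B_R|$.

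The one place where you are too quick is the justification of that monotonicity. You assert that $\widehat v$ is nonincreasing because it is ``implicit'' in the identity (\ref{identity}), but the identity alone does not give it: the right-hand side $\frac{s^{2/n-2}}{n^2\sigma_n^{2/n}}\int_0^s(f^\ast-c\widehat v)\,dt$ has no obvious sign, since $f^\ast-c\widehat v$ can change sign. What is true, and what should be said, is that for the second-order radial Dirichlet problem $-\Delta v_1+cv_1=f^\#$ with $f^\#\ge 0$ radial nonincreasing, one proves $\widehat v_1'\le 0$ by a separate maximum-principle/ODE argument: from the identity, if $\widehat v_1'$ ever became positive, then past the first such point $s_1$ one would have $\widehat v_1$ increasing while $f^\ast$ is nonincreasing, forcing $\int_0^s(f^\ast-c\widehat v_1)$ to stay negative and $\widehat v_1$ to keep increasing, contradicting $\widehat v_1(|B_R|)=0$ (together with $\widehat v_1\ge 0$). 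Once the second-order step is in hand, the polyharmonic case follows by iterating through the system $(Qi)$: each $v_i$ has radial nonincreasing nonnegative data $v_{i-1}$, hence is itself radial nonincreasing, so in particular $v=v_k=v^{\#}$ and $\widehat v=v^\ast$. With this supplement your proof is complete and agrees with the intended (implicit) argument of the paper.

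One small remark on scope: the corollary is not needed only at $r=R$ --- the subsequent weighted Proposition feeds the all-$r$ version of the corollary into the Alvino--Lions--Trombetti lemma with the decreasing weight $h(s)=(\sigma_n/s)^{\alpha/n}$ --- so it is important, as you noted, that the argument delivers the inequality for every $r\in(0,R)$, not just the full ball.
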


Now, we state the following known result from \cite{ALT, Ke}:

\begin{lemma}
Let $f(s),~g(s)$ be measurable, positive functions such that
\[%
{\displaystyle\int\limits_{\left[  0,r\right]  }}
f(s)ds\leq%
{\displaystyle\int\limits_{\left[  0,r\right]  }}
g(s)ds,\ r\in\left[  0,R\right]  ;
\]
if $h(s)\geq0$ is a decreasing function then
\[%
{\displaystyle\int\limits_{\left[  0,r\right]  }}
f(s)h(s)ds\leq%
{\displaystyle\int\limits_{\left[  0,r\right]  }}
g(s)h(s)ds,\ r\in\left[  0,R\right]  .
\]

\end{lemma}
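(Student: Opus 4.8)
The plan is to reduce the weighted inequality to the unweighted hypothesis by writing the decreasing weight $h$ as a superposition of indicator functions of intervals anchored at the origin. Set $F(r)=\int_{[0,r]}f(s)\,ds$ and $G(r)=\int_{[0,r]}g(s)\,ds$, so that the hypothesis is precisely $F(r)\le G(r)$ for every $r\in[0,R]$. The point is that, because $h$ is nonincreasing, each superlevel set $\{s\in[0,R]:h(s)>t\}$ is an interval of the form $[0,\ell(t))$ (up to the single endpoint $\ell(t)$, which is irrelevant for integration), where $\ell(t)=\sup\{s\in[0,R]:h(s)>t\}\le R$, with the convention $\ell(t)=0$ once $t\ge h(0^{+})$. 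The map $t\mapsto\ell(t)$ is nonincreasing, hence measurable.

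With this notation, fix $r\in[0,R]$. Using the layer-cake representation $h(s)=\int_{0}^{\infty}\mathbf 1_{\{h(s)>t\}}\,dt$ and Tonelli's theorem (every integrand below is nonnegative), one gets
\[
\int_{[0,r]}f(s)h(s)\,ds=\int_{0}^{\infty}\!\!\int_{[0,r]}f(s)\,\mathbf 1_{[0,\ell(t))}(s)\,ds\,dt=\int_{0}^{\infty}F\big(\min(r,\ell(t))\big)\,dt ,
\]
and in exactly the same way $\int_{[0,r]}g(s)h(s)\,ds=\int_{0}^{\infty}G\big(\min(r,\ell(t))\big)\,dt$. Since $\min(r,\ell(t))\in[0,R]$ for every $t\ge 0$, the hypothesis gives $F\big(\min(r,\ell(t))\big)\le G\big(\min(r,\ell(t))\big)$ for all $t$; integrating this pointwise inequality over $t\in(0,\infty)$ yields $\int_{[0,r]}fh\le\int_{[0,r]}gh$, which is the claim (the identities hold in $[0,+\infty]$, so no finiteness assumption on the integrals is needed).

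The argument is essentially routine; the only step that genuinely uses the hypotheses is the observation that the superlevel sets of $h$ are intervals anchored at $0$, which is exactly where monotonicity of $h$ enters, together with the measurability of $\ell$ that makes Tonelli applicable. An equivalent route is an integration-by-parts argument: writing $-dh=d\nu$ for a nonnegative Lebesgue–Stieltjes measure $\nu$ on $(0,R]$, one obtains $\int_{[0,r]}fh=h(r)F(r)+\int_{(0,r]}F(t)\,d\nu(t)$ and compares the two terms against the analogous expression for $g$ using $h(r)\ge 0$, $F\le G$ on $[0,R]$, and $\nu\ge 0$. I would present the layer-cake version, since it sidesteps any discussion of boundary terms and of the measure associated with a merely monotone function.
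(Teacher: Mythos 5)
Your argument is correct and complete: the layer-cake identity $h(s)=\int_{0}^{\infty}\mathbf 1_{\{h(s)>t\}}\,dt$, the observation that superlevel sets of a nonincreasing $h$ are (up to a single point) intervals $[0,\ell(t))$ with $\ell(t)\le R$, and Tonelli reduce the weighted inequality to the hypothesis $F\le G$ evaluated at $\min(r,\ell(t))$, and since all quantities are nonnegative the identities hold in $[0,+\infty]$ with no integrability caveats. One remark worth noting is that what Tonelli actually requires is the joint measurability of $(s,t)\mapsto\mathbf 1_{\{h(s)>t\}}$, which is automatic from the measurability of $h$; the measurability of $\ell$ that you invoke is a harmless but inessential detour, since for each fixed $t$ the inner integral equals $F(\min(r,\ell(t)))$ directly.

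For comparison: the paper does not prove this lemma at all; it is quoted as a known result from the references \cite{ALT} and \cite{Ke}, so there is no in-paper proof to measure yours against. The classical proofs in those sources rest on the same idea you use, namely representing the decreasing weight as a nonnegative superposition of indicators of initial intervals $[0,\ell)$ and applying the unweighted hypothesis on each such interval --- carried out either by step-function approximation of $h$ or by the Stieltjes integration-by-parts identity you sketch as an alternative. Your layer-cake formulation is a clean, self-contained way of doing exactly this, and it indeed avoids the boundary-term and right-/left-continuity bookkeeping that the integration-by-parts route would require.
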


Then we have the following:

\begin{proposition}
Let $u,~v$ be weak solutions of (\ref{2.4}) and (\ref{2.6}) respectively. For
every convex nondecreasing function $\phi:\left[  0,+\infty\right)
\rightarrow\left[  0,+\infty\right)  $ we have%
\[
\int_{B_{R}}\frac{\phi\left(  \left\vert u\right\vert \right)  }{\left\vert
x\right\vert ^{\alpha}}dx\leq\int_{B_{R}}\frac{\phi\left(  \left\vert
v\right\vert \right)  }{\left\vert x\right\vert ^{\alpha}}dx,~0\leq\alpha<n.
\]

\end{proposition}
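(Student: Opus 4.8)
The plan is to symmetrize the function $u$, reduce both sides to one-dimensional integrals against the nonincreasing weight coming from $|x|^{-\alpha}$, and then combine the rearrangement comparison of Proposition~2.3 with a convexity argument and Lemma~2.1.

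First I would symmetrize the left-hand side. A convex nondecreasing $\phi$ is continuous on $[0,\infty)$, and the decreasing rearrangement commutes with it: $(\phi\circ|u|)^{\ast}=\phi\circ u^{\ast}$, hence $(\phi\circ|u|)^{\#}=\phi(u^{\#})$. Since the weight $|x|^{-\alpha}$ is radial and nonincreasing in $|x|$ on $B_{R}$ (constant when $\alpha=0$), it equals its own symmetric decreasing rearrangement, so the Hardy--Littlewood inequality gives
\[
\int_{B_{R}}\frac{\phi(|u|)}{|x|^{\alpha}}\,dx\ \leq\ \int_{B_{R}}\frac{\phi(u^{\#})}{|x|^{\alpha}}\,dx .
\]
On the other hand $v$ is radially symmetric, nonnegative and nonincreasing, so $v=v^{\#}$ and the right-hand side of the proposition is already radial; thus it suffices to show
\[
\int_{B_{R}}\frac{\phi(u^{\#})}{|x|^{\alpha}}\,dx\ \leq\ \int_{B_{R}}\frac{\phi(v)}{|x|^{\alpha}}\,dx .
\]

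Next I would change variables to $s=\sigma_{n}|x|^{n}\in(0,|B_{R}|)$. Put $U(s):=u^{\ast}(s)$ and $V(s):=v^{\ast}(s)$ (both nonnegative and nonincreasing), and $h(s):=(s/\sigma_{n})^{-\alpha/n}$, which is nonnegative and nonincreasing on $(0,|B_{R}|]$. Then the two integrals above equal $\int_{0}^{|B_{R}|}\phi(U)h\,ds$ and $\int_{0}^{|B_{R}|}\phi(V)h\,ds$, while Proposition~2.3 (using $v=v^{\#}$) says exactly that $\int_{0}^{s}U(t)\,dt\leq\int_{0}^{s}V(t)\,dt$ for all $s\in(0,|B_{R}|)$. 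I would then upgrade this $L^{1}$-domination to $\int_{0}^{s}\phi(U(t))\,dt\leq\int_{0}^{s}\phi(V(t))\,dt$ for all $s$: writing $A(s):=\int_{0}^{s}(V-U)\geq0$ with $A(0)=0$, the subgradient bound $\phi(V)-\phi(U)\geq\phi'_{+}(U)(V-U)=\phi'_{+}(U)\,A'$ together with an integration by parts gives
\[
\int_{0}^{s}\left[\phi(V)-\phi(U)\right]dt\ \geq\ \phi'_{+}(U(s))\,A(s)-\int_{0}^{s}A\,d\left[\phi'_{+}(U)\right]\ \geq\ 0,
\]
since $A\geq0$, $\phi'_{+}\geq0$, and $t\mapsto\phi'_{+}(U(t))$ is nonincreasing ($U$ nonincreasing, $\phi'_{+}$ nondecreasing). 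Finally, Lemma~2.1 applied with $f=\phi\circ U$, $g=\phi\circ V$ and the decreasing weight $h$ gives $\int_{0}^{|B_{R}|}\phi(U)h\,ds\leq\int_{0}^{|B_{R}|}\phi(V)h\,ds$, which is the desired inequality. (Equivalently, one absorbs $h$ into the weight $w:=\phi'_{+}(U)\,h$, still nonnegative and nonincreasing, and runs the same integration by parts directly, bypassing Lemma~2.1.)

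The step I expect to require the most care is this last one, the passage from $L^{1}$-majorization to $\phi$-majorization: one works with the right derivative $\phi'_{+}$ so as not to assume $\phi$ differentiable, uses the automatic continuity of convex functions, and --- to subtract possibly infinite integrals legitimately --- first reduces to the case $\int_{B_{R}}\phi(v)|x|^{-\alpha}\,dx<\infty$ and then truncates $U,V$ (or approximates $\phi$), passing to the limit by monotone convergence; these are routine. Everything else is bookkeeping with rearrangements. The substantive inputs are Proposition~2.3, where the polyharmonic structure and the Navier boundary conditions were used, and the classical fact --- recovered here by the one-line integration by parts --- that $L^{1}$-majorization of decreasing rearrangements is inherited by $u\mapsto\int\phi(u)$ for convex nondecreasing $\phi$.
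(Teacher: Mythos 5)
Your proposal is correct and reconstructs the proof the paper leaves unwritten using exactly the tools the paper lines up in Section~2: the Hardy--Littlewood rearrangement step to pass to $u^{\#}$, the $L^{1}$-majorization of Proposition~2.3, the upgrade to $\phi$-majorization (which is the paper's unproved Corollary), and finally Lemma~2.1 applied with the nonincreasing weight $h(s)=(s/\sigma_{n})^{-\alpha/n}$. The only thing you add beyond what the paper sketches is the explicit subgradient/integration-by-parts verification that $\int_{0}^{s}U\le\int_{0}^{s}V$ for all $s$ implies $\int_{0}^{s}\phi(U)\le\int_{0}^{s}\phi(V)$ for all $s$; that argument is sound, with the truncation caveat you already flag for the case $U(0^{+})=\infty$.
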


Next, we provide some Radial Lemmas which will be used in the proof of Theorem
1.2. See \cite{BL, K, LaLu4, RS, Ta}:

\begin{lemma}
If $u\in W^{1,\frac{n}{m}}\left(
\mathbb{R}
^{n}\right)  $ then
\[
\left\vert u(x)\right\vert \leq\left(  \frac{1}{m\sigma_{n}}\right)
^{\frac{m}{n}}\frac{1}{\left\vert x\right\vert ^{\frac{n-1}{n}m}}\left\Vert
u\right\Vert _{W^{1,\frac{n}{m}}}%
\]
for a.e. $x\in%
\mathbb{R}
^{n}$.
\end{lemma}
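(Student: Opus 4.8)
The plan is to reduce to smooth compactly supported radial functions and then run the one–dimensional argument that underlies every radial lemma. Write $p=\frac nm$; since the relevant range (and the only one used) is $m<n$ we have $p>1$, and since the statement concerns the radial decay of $u$ I take $u=u(|x|)=u(r)$ to be radially symmetric. First I would assume $u\in C_0^\infty(\mathbb R^n)$ is radial, so that $u(s)\to 0$ as $s\to\infty$; then for every $r>0$ the fundamental theorem of calculus (note that $t\mapsto |t|^p$ is $C^1$ since $p>1$) gives
\[
|u(r)|^{p}=-\int_r^{\infty}\frac{d}{ds}\big(|u(s)|^{p}\big)\,ds\le p\int_r^{\infty}|u(s)|^{p-1}\,|u'(s)|\,ds .
\]

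Next I would insert the radial weight $s^{n-1}$ and pull out the worst power of $s$ using $s^{-(n-1)}\le r^{-(n-1)}$ for $s\ge r$, obtaining
\[
|u(r)|^{p}\le p\,r^{-(n-1)}\int_r^{\infty}|u(s)|^{p-1}\,|u'(s)|\,s^{n-1}\,ds .
\]
Writing $s^{n-1}=s^{(n-1)/p'}s^{(n-1)/p}$ with $p'=\frac{p}{p-1}$ and applying Hölder's inequality (note $(p-1)p'=p$), then extending the integrals to $(0,\infty)$ and passing to polar coordinates (using $|\nabla u(x)|=|u'(|x|)|$ for radial $u$ and $\omega_{n-1}=n\sigma_n$), I get
\[
\int_r^{\infty}|u(s)|^{p-1}|u'(s)|s^{n-1}\,ds\le\Big(\frac{1}{\omega_{n-1}}\|u\|_p^{p}\Big)^{1/p'}\Big(\frac{1}{\omega_{n-1}}\|\nabla u\|_p^{p}\Big)^{1/p}=\frac{1}{\omega_{n-1}}\,\|u\|_p^{p-1}\,\|\nabla u\|_p .
\]
Finally, Young's inequality with exponents $p'$ and $p$ gives $\|u\|_p^{p-1}\|\nabla u\|_p\le\frac{p-1}{p}\|u\|_p^{p}+\frac1p\|\nabla u\|_p^{p}\le\|u\|_p^{p}+\|\nabla u\|_p^{p}=\|u\|_{W^{1,\frac nm}}^{p}$, so that
\[
|u(r)|^{p}\le\frac{p}{\omega_{n-1}}\,r^{-(n-1)}\,\|u\|_{W^{1,\frac nm}}^{p}.
\]

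To conclude, I would use the identity $\frac{p}{\omega_{n-1}}=\frac{p}{n\sigma_n}=\frac{1}{m\sigma_n}$ (since $m=\frac np$) and take $p$-th roots, noting $\frac1p=\frac mn$ and $\frac{n-1}{p}=\frac{n-1}{n}m$; this yields exactly
\[
|u(x)|\le\Big(\frac{1}{m\sigma_n}\Big)^{\frac mn}\frac{1}{|x|^{\frac{n-1}{n}m}}\,\|u\|_{W^{1,\frac nm}}
\]
for radial $u\in C_0^\infty(\mathbb R^n)$, hence everywhere. For a general radial $u\in W^{1,\frac nm}(\mathbb R^n)$ I would choose radial $u_k\in C_0^\infty(\mathbb R^n)$ with $u_k\to u$ in $W^{1,\frac nm}$, pass to a subsequence converging a.e., and let $k\to\infty$ in the estimate (using $\|u_k\|_{W^{1,\frac nm}}\to\|u\|_{W^{1,\frac nm}}$) to obtain the bound for a.e.\ $x$. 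The only point requiring care is this density/limiting step together with the vanishing at infinity used in the first line; this is precisely where radial symmetry is essential (the pointwise bound fails for general $W^{1,\frac nm}$ functions once $m>1$), while everything else is the elementary computation above.
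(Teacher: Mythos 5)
Your proof is correct. The paper itself gives no proof of this lemma---it simply cites \cite{BL, K, LaLu4, RS, Ta}---so there is no argument to compare against, but the computation you carry out is precisely the standard Strauss/Berestycki--Lions radial lemma derivation used in those references: write $|u(r)|^{p}$ as an integral of its derivative from $r$ to $\infty$, insert $s^{n-1}$, pull out the factor $r^{-(n-1)}$, apply H\"older, convert to polar coordinates via $\omega_{n-1}=n\sigma_n$, and close with Young's inequality. All exponent bookkeeping checks out: $(p-1)p'=p$, $\omega_{n-1}^{-1/p'}\omega_{n-1}^{-1/p}=\omega_{n-1}^{-1}$, $(p/\omega_{n-1})^{1/p}=(1/(m\sigma_n))^{m/n}$, and $(n-1)/p=\frac{n-1}{n}m$, which recovers exactly the stated constant. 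The density/a.e.\ limiting step at the end is also handled correctly.

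You are also right that the hypothesis as literally printed is too weak: the paper's heading calls this a ``Radial Lemma'' and applies it only to the radial rearranged solutions $v_k$, but the word ``radial'' was dropped from the statement of $u\in W^{1,n/m}(\mathbb{R}^n)$. For nonradial $u$ no such pointwise decay can hold (concentrating bumps at a fixed $x_0$ keep $\|u\|_{W^{1,n/m}}$ bounded while $|u(x_0)|\to\infty$), so your insertion of the radial-symmetry assumption---and the choice of radial approximating sequence in the density step---is a genuine and necessary correction, not just a clarification.
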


\begin{lemma}
If $u\in L^{p}\left(
\mathbb{R}
^{n}\right)  ,~1\leq p<\infty,$ is a radial nonincreasing function, then
\[
\left\vert u(x)\right\vert \leq\left(  \frac{n}{\omega_{n-1}}\right)
^{\frac{1}{p}}\frac{1}{\left\vert x\right\vert ^{\frac{n}{p}}}\left\Vert
u\right\Vert _{L^{p}\left(
\mathbb{R}
^{n}\right)  }%
\]
for a.e. $x\in%
\mathbb{R}
^{n}$.
\end{lemma}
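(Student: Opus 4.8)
The plan is to read the bound directly off the monotonicity of $u$ along rays, combined with the elementary identity relating the volume of a ball to the surface area of a sphere; unlike Lemma 2.7, no differentiability or Sobolev regularity of $u$ is needed. First I would replace $u$ by its radially nonincreasing representative, writing $u(y)=\widetilde{u}(|y|)$ with $\widetilde{u}$ nonincreasing on $(0,\infty)$; this changes $u$ only on a null set, which is harmless since the conclusion is claimed only a.e. Then, for a fixed $x\neq 0$ and every $y$ with $|y|\leq |x|$, monotonicity gives $|u(y)|=|\widetilde{u}(|y|)|\geq |\widetilde{u}(|x|)|=|u(x)|$.

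Next I would estimate $\|u\|_{L^p}^p$ from below by throwing away the integral outside the ball $B_{|x|}=\{y:|y|<|x|\}$ and using the pointwise lower bound just obtained:
\[
\|u\|_{L^{p}(\mathbb{R}^{n})}^{p}\;\geq\;\int_{B_{|x|}}|u(y)|^{p}\,dy\;\geq\;|u(x)|^{p}\,\bigl|B_{|x|}\bigr|\;=\;\sigma_{n}\,|x|^{n}\,|u(x)|^{p},
\]
where $\sigma_{n}$ denotes the volume of the unit ball in $\mathbb{R}^{n}$. Integrating in polar coordinates gives $\omega_{n-1}=n\sigma_{n}$ (the unit sphere carries surface measure $\omega_{n-1}$), so the right-hand side equals $\tfrac{\omega_{n-1}}{n}\,|x|^{n}\,|u(x)|^{p}$.

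Finally I would solve for $|u(x)|$, obtaining $|u(x)|^{p}\leq \tfrac{n}{\omega_{n-1}|x|^{n}}\,\|u\|_{L^{p}}^{p}$, and take $p$-th roots to reach the asserted inequality $|u(x)|\leq (n/\omega_{n-1})^{1/p}\,|x|^{-n/p}\,\|u\|_{L^{p}(\mathbb{R}^n)}$ for every $x\neq 0$, hence for a.e.\ $x\in\mathbb{R}^n$. There is no real obstacle in this argument: the only point worth a word is the reduction to a genuinely (not merely a.e.) nonincreasing representative, after which the estimate is immediate; the case $\|u\|_{L^p}=\infty$ is of course vacuous, and $u\in L^p(\mathbb{R}^n)$ is assumed anyway.
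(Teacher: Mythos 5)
Your proof is correct and is exactly the standard radial lemma argument (the one found in the references the paper cites, e.g.\ Berestycki--Lions): bound $\|u\|_{L^p}^p$ from below by the integral over $B_{|x|}$, use monotonicity to replace $|u(y)|$ by $|u(x)|$ there, and use $|B_{|x|}|=\frac{\omega_{n-1}}{n}|x|^n$. The paper itself states this lemma without proof and simply cites the literature, so there is nothing to compare beyond noting that your derivation is the expected one; the only small refinement you correctly flag is passing to a genuinely nonincreasing representative so the pointwise comparison holds everywhere rather than merely a.e.
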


\section{Proof of Theorem 1.1: Sharp inequality of exponential type for
fractional integrals}

\bigskip We begin with proving the following result that is a modified version
of the key lemma used to prove the Adams inequality in \cite{A}:

\begin{lemma}
Let $0<\alpha\leq1$, $1<p<\infty$ and $a(s,t)$ be a non-negative measurable
function on $\left(  -\infty,\infty\right)  \times\left[  0,\infty\right)  $
such that (a.e.)
\begin{align}
a(s,t)  &  \leq1,~\text{when }0<s<t,\label{2.7}\\
\underset{t>0}{\sup}\left(
{\displaystyle\int\limits_{-\infty}^{0}}
+%
{\displaystyle\int\limits_{t}^{\infty}}
a(s,t)^{p^{\prime}}ds\right)  ^{1/p^{\prime}}  &  =b<\infty. \label{2.8}%
\end{align}
Then there is a constant $c_{0}=c_{0}(p,b)$ such that if for $\phi\geq0,$%
\begin{equation}%
{\displaystyle\int\limits_{-\infty}^{\infty}}
\phi(s)^{p}ds\leq1, \label{2.9}%
\end{equation}
then
\begin{equation}%
{\displaystyle\int\limits_{0}^{\infty}}
e^{-F_{\alpha}(t)}dt\leq c_{0} \label{2.10}%
\end{equation}
where
\begin{equation}
F_{\alpha}(t)=\alpha t-\alpha\left(
{\displaystyle\int\limits_{-\infty}^{\infty}}
a(s,t)\phi(s)ds\right)  ^{p^{\prime}}. \label{2.11}%
\end{equation}

\end{lemma}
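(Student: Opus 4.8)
The plan is to reproduce, with the weight parameter $\alpha$ carried through, the proof of the one-dimensional key lemma behind the Adams inequality (Lemma~1 of \cite{A}); the factor $\alpha$ in \eqref{2.11} will only cost a harmless multiplicative constant. First I would fix $t>0$ and set $\mu(t)=\int_{0}^{t}\phi(s)^{p}\,ds$, which by \eqref{2.9} is nondecreasing in $t$ with $0\le\mu(t)\le1$. Splitting $\mathbb{R}=(-\infty,0)\cup(0,t)\cup(t,\infty)$ and applying H\"older's inequality on each piece --- using \eqref{2.7} on $(0,t)$, so that $\|a(\cdot,t)\|_{L^{p'}(0,t)}\le t^{1/p'}$, and \eqref{2.8} on the complement --- gives
\[
\int_{-\infty}^{\infty}a(s,t)\phi(s)\,ds\ \le\ \mu(t)^{1/p}\,t^{1/p'}+b\,(1-\mu(t))^{1/p}\ =:\ G(t)^{1/p'}.
\]
Consequently $F_{\alpha}(t)\ge\alpha\,\bigl(t-G(t)\bigr)$, and \eqref{2.10} reduces to bounding $\int_{0}^{\infty}e^{-\alpha(t-G(t))}\,dt$.

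Next I would record two elementary bounds on $g(t):=t-G(t)$. Since $G(t)=\bigl(\mu(t)^{1/p}t^{1/p'}+b(1-\mu(t))^{1/p}\bigr)^{p'}$, the convexity of $x\mapsto x^{p'}$ applied with the weights $\tfrac{1+\mu(t)}{2},\tfrac{1-\mu(t)}{2}$ yields
\[
G(t)\ \le\ \Bigl(\tfrac{2\mu(t)}{1+\mu(t)}\Bigr)^{p'-1}t+2^{p'-1}b^{p'}\ \le\ t+2^{p'-1}b^{p'},
\]
so $g(t)\ge -C_{0}$ for a constant $C_{0}=C_{0}(p,b)$; and if $\mu(t)\le1-\delta$, the first coefficient is $\le(\tfrac{2(1-\delta)}{2-\delta})^{p'-1}=:1-\eta_{\delta}<1$, so $g(t)\ge\eta_{\delta}\,t-C_{0}$ there. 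Because $\mu$ is monotone, $\{t:\mu(t)\le1-\delta\}$ is an initial interval, and on it $\int e^{-\alpha g}$ is already controlled by $e^{\alpha C_{0}}/(\alpha\eta_{\delta})$; the whole difficulty is the ``saturated'' range where $\mu(t)$ is close to $1$.

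On that range the goal is the measure estimate
\[
\bigl|\{\,t>0:\ t-G(t)<\lambda\,\}\bigr|\ \le\ C_{1}+\lambda ,\qquad\lambda\ge0,
\]
with $C_{1}=C_{1}(p,b)$, which is precisely the delicate estimate in Moser's and Adams' arguments and which I expect to be the main obstacle. The naive bound from the convexity inequality, $g(t)\gtrsim(1-\mu(t))\,t-C_{0}$, is genuinely too weak, since $\mu(t)^{p'-1}\to1$ as $\mu(t)\to1$ and $\{t:(1-\mu(t))t<K\}$ can have infinite measure. The resolution is Moser's observation that $t-G(t)$ can be small only if, simultaneously, $\mu(t)$ is close to $1$ \emph{and} $\phi$ is close to its H\"older-extremal (locally constant) profile on $(0,t)$; splitting $\int_{0}^{t}a\phi=\int_{0}^{\tau}+\int_{\tau}^{t}$ at a well-chosen intermediate point $\tau=\tau(t)<t$ and using the monotonicity of $\mu$ --- the $\phi^{p}$-mass beyond $\tau$ is small, while the mass on $(0,\tau)$ cannot itself be near-extremal --- shows these two requirements are incompatible over a long $t$-range, which is what forces the level set $\{t-G(t)<\lambda\}$ to have measure $O(1)+O(\lambda)$.

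Finally, granting $g\ge -C_{0}$ and the measure estimate, a layer-cake computation finishes the proof:
\[
\int_{0}^{\infty}e^{-\alpha g(t)}\,dt=\alpha\int_{-C_{0}}^{\infty}e^{-\alpha\lambda}\,\bigl|\{g<\lambda\}\bigr|\,d\lambda\ \le\ C_{1}e^{\alpha C_{0}}+\tfrac1\alpha ,
\]
which gives \eqref{2.10}. Here $c_{0}$ depends only on $p$ and $b$ up to the factor $1/\alpha$, which is immaterial since $\alpha\in(0,1]$ is a fixed parameter in every application.
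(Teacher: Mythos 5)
Your opening observation---that $F_{\alpha}=\alpha F_{1}$ where $F_{1}(t)=t-\left(\int a\phi\right)^{p'}$, so the factor $\alpha\in(0,1]$ only rescales the final constant---is correct and is exactly what the paper uses. The paper's proof works with $F_{\alpha}$ (equivalently $F_{1}$) directly, establishes the two steps of Adams' Lemma 1 (the lower bound $F_{1}\geq -c(p,b)$, and the level-set estimate $|\{t\geq0: F_{1}(t)\leq\lambda\}|\leq A|\lambda|+B$), and concludes by a layer-cake integral; both steps are referred to Adams' original argument.

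Your proposal, however, departs from this after the H\"older step. You replace $\left(\int a\phi\right)^{p'}$ by its H\"older majorant
$G(t)=\left(\mu(t)^{1/p}t^{1/p'}+b\,(1-\mu(t))^{1/p}\right)^{p'}$,
which depends \emph{only} on $t$ and $\mu(t)$, and then try to prove the level-set bound $|\{t:t-G(t)<\lambda\}|\leq C_{1}+\lambda$ for $g(t)=t-G(t)$. This estimate is false, and the reduction is already lost one line earlier. Take $b=0$ and, for large $N$, $\phi(s)=(\ln N)^{-1/p}s^{-1/p}\mathbf{1}_{(1,N)}(s)$, which satisfies $\int\phi^{p}=1$. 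Then $\mu(t)=1$ for all $t\geq N$, so $G(t)=t$ and $g(t)\equiv0$ on $(N,\infty)$; consequently $\{g<\lambda\}$ has infinite measure for every $\lambda>0$ and $\int_{0}^{\infty}e^{-\alpha g}\,dt=+\infty$, whereas $F_{\alpha}(t)$ itself grows linearly there and gives a finite integral. (Even on $(1,N)$ one computes $|\{g<\lambda\}\cap(1,N)|\gtrsim\lambda\ln N$, unbounded in $N$.) The H\"older step is simply too lossy: by recording only the running mass $\mu(t)$ it discards exactly the finer information about $\phi$ that Adams' Lemma 1 exploits. In the actual proof of the level-set estimate one keeps $\int a\phi$ itself, and uses that if $t_{1}<t_{2}$ both lie in $\{F_{1}\leq\lambda\}$ then $\int_{t_{1}}^{t_{2}}\phi^{p}$ is bounded below, which together with $\int\phi^{p}\leq1$ caps $|\{F_{1}\leq\lambda\}|$; this argument cannot be run on $g$. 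Your Step 1 ($g\geq -C_{0}$, hence $F_{1}\geq -C_{0}$) is fine and does recover Adams' Step 1 by a neat convexity trick, but Step 2 must be proved for $F_{1}$, not $g$, and requires Adams' more refined argument rather than H\"older plus monotonicity of $\mu$.
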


We sketch a proof here.

\begin{proof}
First, we have
\begin{equation}%
{\displaystyle\int\limits_{0}^{\infty}}
e^{-F_{\alpha}(t)}dt=%
{\displaystyle\int\limits_{-\infty}^{\infty}}
\left\vert E_{\lambda}\right\vert e^{-\lambda}d\lambda. \label{2.12}%
\end{equation}
where $E_{\lambda}=\left\{  t\geq0:F_{\alpha}(t)\leq\lambda\right\}  .$

We will separate the proof into two steps.

\textbf{Step 1:} \, There is a constant $c=c(p,b,\alpha)>0$ such that
$F_{\alpha}(t)\geq-c$ for all $t\geq0$.

Indeed, we will show that if $E_{\alpha\lambda}\neq\emptyset$, then
$\lambda\geq-c$, and furthermore that if $t\in E_{\alpha\lambda}$, then%
\begin{equation}
\left(  b^{p^{\prime}}+t\right)  ^{1/p}\left(
{\displaystyle\int\limits_{t}^{\infty}}
\phi(s)^{p}ds\right)  ^{1/p}\leq A_{1}+B_{1}\left\vert \lambda\right\vert
^{1/p}. \label{2.13}%
\end{equation}

In fact, if $E_{\alpha\lambda}\neq\emptyset$ and $t\in E_{\alpha\lambda}$,
then
\begin{align*}
t-\lambda &  \leq t-\frac{F_{\alpha}(t)}{\alpha}\\
&  \leq\left(
{\displaystyle\int\limits_{-\infty}^{\infty}}
a(s,t)\phi(s)ds\right)  ^{p^{\prime}}%
\end{align*}
Repeating the argument as in the proof of Lemma 1 in \cite{A}, we then have
completed Step 1.

\textbf{Step 2:} \, $\left\vert E_{\lambda}\right\vert \leq A\left\vert
\lambda\right\vert +B$, for constants $A$ and $B$ depending only on $p,~b$ and
$\alpha$.

The proof of Step 2 is very similar to that in \cite{A}. Thus we finish the
proof of the Lemma.
\end{proof}

Using the above lemma, we can provide the

\noindent\textbf{Proof of Theorem 1.1:}

Set $u(x)=I_{n/p}\ast f(x)$, for $f\geq0$. We use the notations
$g(x)=|x|^{\gamma-n}$ and $u^{**}(t)=\frac{1}{t}\int_{0}^{t} u^{*}(s)ds$. Then
by O'Neil's lemma, we have that
\begin{align*}
u^{\ast}(t)  &  \leq u^{\ast\ast}(t)\leq tf^{\ast\ast}(t)g^{\ast\ast}(t)+%
{\displaystyle\int\limits_{0}^{t}}
f^{\ast}(s)g^{\ast}(s)ds\\
&  =\left(  \frac{\omega_{n-1}}{n}\right)  ^{1/p^{\prime}}\left(
pt^{-1/p^{\prime}}%
{\displaystyle\int\limits_{0}^{t}}
f^{\ast}(s)ds+%
{\displaystyle\int\limits_{t}^{\left\vert \Omega\right\vert }}
f^{\ast}(s)s^{-1/p^{\prime}}ds\right)  .
\end{align*}
Now, we change variables by setting $\phi(s)=\left\vert \Omega\right\vert
^{1/p}f^{\ast}(\left\vert \Omega\right\vert e^{-s})e^{-s/p}$, so that%

\begin{align*}
\int_{\Omega}f(x)^{p}dx &  =%
{\displaystyle\int\limits_{0}^{\left\vert \Omega\right\vert }}
f^{\ast}(t)^{p}dt\\
&  =%
{\displaystyle\int\limits_{0}^{\infty}}
\phi(s)^{p}ds.
\end{align*}
By the Hardy-Littlewood inequality, note that with $h(x)=\frac{1}{\left\vert
x\right\vert ^{\alpha}},$ then $h^{\ast}(t)=\left(  \frac{\sigma_{n}}%
{t}\right)  ^{\frac{\alpha}{n}}$, we have
\begin{align*}
&  \int_{\Omega}\frac{\exp\left(  \left(  1-\frac{\alpha}{n}\right)  \frac
{n}{\omega_{n-1}}\left\vert u(x)\right\vert ^{p^{\prime}}\right)  }{\left\vert
x\right\vert ^{\alpha}}dx\\
&  \leq\sigma_{n}^{\frac{\alpha}{n}}%
{\displaystyle\int\limits_{0}^{\left\vert \Omega\right\vert }}
\frac{e^{\left(  1-\frac{\alpha}{n}\right)  \frac{n}{\omega_{n-1}}u^{\ast
}(t)^{p^{\prime}}}}{t^{\frac{\alpha}{n}}}\\
&  =\sigma_{n}^{\frac{\alpha}{n}}\left\vert \Omega\right\vert ^{1-\frac
{\alpha}{n}}%
{\displaystyle\int\limits_{0}^{\infty}}
\exp\left[  \left(  1-\frac{\alpha}{n}\right)  \frac{n}{\omega_{n-1}}u^{\ast
}\left(  \left\vert \Omega\right\vert e^{-s}\right)  ^{p^{\prime}}-\left(
1-\frac{\alpha}{n}\right)  s\right]  ds\\
&  \leq\sigma_{n}^{\frac{\alpha}{n}}\left\vert \Omega\right\vert
^{1-\frac{\alpha}{n}}\times\\
&
{\displaystyle\int\limits_{0}^{\infty}}
\exp\left[  \left(  1-\frac{\alpha}{n}\right)  \left(  p\left(  \left\vert
\Omega\right\vert e^{-s}\right)  ^{-\frac{1}{p^{\prime}}}%
{\displaystyle\int\limits_{0}^{\left\vert \Omega\right\vert e^{-s}}}
f^{\ast}(z)dz+%
{\displaystyle\int\limits_{\left\vert \Omega\right\vert e^{-s}}^{\left\vert
\Omega\right\vert }}
f^{\ast}(z)z^{-\frac{1}{p^{\prime}}}dz\right)  ^{p^{\prime}}-\left(
1-\frac{\alpha}{n}\right)  s\right]  ds\\
&  =\sigma_{n}^{\frac{\alpha}{n}}\left\vert \Omega\right\vert ^{1-\frac
{\alpha}{n}}%
{\displaystyle\int\limits_{0}^{\infty}}
\exp\left[  \left(  1-\frac{\alpha}{n}\right)  \left(  pe^{s/p^{\prime}}%
{\displaystyle\int\limits_{s}^{\infty}}
\phi(w)e^{-\frac{w}{p^{\prime}}}dw+%
{\displaystyle\int\limits_{0}^{s}}
\phi(w)\right)  ^{p^{\prime}}-\left(  1-\frac{\alpha}{n}\right)  s\right]
ds\\
&  =\sigma_{n}^{\frac{\alpha}{n}}\left\vert \Omega\right\vert ^{1-\frac
{\alpha}{n}}%
{\displaystyle\int\limits_{0}^{\infty}}
\exp\left[  -F_{\left(  1-\frac{\alpha}{n}\right)  }(s)\right]  ds.
\end{align*}
where $F_{\left(  1-\frac{\alpha}{n}\right)  }(s)$ is as in Lemma 3.1 with
\[
a(s,t)=\left\{
\begin{array}
[c]{c}%
1\,\,\text{ for }0<s<t\\
pe^{(t-s)/p^{\prime}}\,\,\text{ for }t<s<\infty\\
0\,\,\text{ for }-\infty<s\leq0
\end{array}
\right.  .
\]
Thus it suffices to prove that
\[%
{\displaystyle\int\limits_{0}^{\infty}}
\phi(s)^{p}ds\leq1\text{ implies }%
{\displaystyle\int\limits_{0}^{\infty}}
\exp\left[  -F_{\left(  1-\frac{\alpha}{n}\right)  }(s)\right]  ds\leq c_{0},
\]
but this follows from Lemma 3.1 immediately.

\section{\bigskip Proof of Theorem 1.2: A singular Adams inequality on bounded
domains}

First, we will prove that
\begin{equation}
\label{inequality}\underset{u\in W_{0}^{m,\frac{n}{m}}\left(  \Omega\right)
,~\left\Vert \nabla^{m}u\right\Vert _{\frac{n}{m}}\leq1}{\sup}\int_{\Omega
}\frac{e^{\beta_{\alpha,n,m}\left\vert u\right\vert ^{\frac{n}{n-m}}}%
}{\left\vert x\right\vert ^{\alpha}}dx<\infty.
\end{equation}
To do that, it suffices to dominate an arbitrary $C^{m}$ function with compact
support by a Riesz potential in such a way that the constants are precise.
This can be done as in \cite{A} through the following lemma:

\begin{lemma}
\label{lemma2.1} Let $u\in C_{0}^{\infty}\left(
\mathbb{R}
^{n}\right)  $. Set $p=\frac{n}{m}$ and $p^{\prime}=\frac{n}{n-m}$. Then if
$m$ is an odd positive integer,%
\[
u(x)=(-1)^{\frac{m-1}{2}}\left(  \frac{\omega_{n-1}\beta(n, m)}{n}\right)
^{-1/p^{\prime}}\times\int_{%
\mathbb{R}
^{n}}\left\vert x-y\right\vert ^{m-1-n}(x-y)\cdot\nabla^{m}u(y)dy
\]
and for $m$ an even positive integer%
\[
u(x)=(-1)^{\frac{m}{2}}\left(  \frac{\omega_{n-1}\beta(n, m)}{n}\right)
^{-1/p^{\prime}}\times\int_{%
\mathbb{R}
^{n}}\left\vert x-y\right\vert ^{m-n}\nabla^{m}u(y)dy
\]

\end{lemma}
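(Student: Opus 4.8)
The plan is to establish the representation formula by reducing it to the classical fundamental solution of powers of the Laplacian, following the approach of Adams \cite{A}. First I would recall that for $u \in C_0^\infty(\mathbb{R}^n)$ one has the Riesz-type representation in terms of the Newtonian potential: for the Laplacian, $u(x) = c_n \int_{\mathbb{R}^n} |x-y|^{2-n} (-\Delta u)(y)\, dy$ with $c_n = \frac{1}{(n-2)\omega_{n-1}}$ (with the usual modification for $n=2$), and more generally the fundamental solution of $\Delta^j$ is a constant multiple of $|x|^{2j-n}$ (again with logarithmic corrections in the exceptional even-dimensional cases, which do not occur here since $m < n$). Iterating, for $m$ even, $m = 2k$, one writes $u = E_m * (\Delta^k u) = E_m * (\nabla^m u)$ where $E_m(x) = \gamma_{m,n} |x|^{m-n}$, and the task is purely to compute the constant $\gamma_{m,n}$ and check it matches $(-1)^{m/2}\left(\frac{\omega_{n-1}\beta(n,m)}{n}\right)^{-1/p'}$.

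The computation of the constant is the crux. The standard way is via the Fourier transform: $\widehat{|x|^{-\lambda}} = c(n,\lambda)|\xi|^{\lambda - n}$ with $c(n,\lambda) = \pi^{n/2} 2^{n-\lambda} \Gamma\!\left(\frac{n-\lambda}{2}\right)/\Gamma\!\left(\frac{\lambda}{2}\right)$, so that convolution with $|x|^{m-n}$ corresponds on the Fourier side to multiplication by a constant times $|\xi|^{-m}$, which inverts $(-\Delta)^{m/2} = |\xi|^m$. Carefully tracking the Gamma-function factors, one finds that the normalizing constant that makes $\Delta^{m/2}(\gamma_{m,n}|x|^{m-n}) = \delta_0$ is exactly $\gamma_{m,n} = \left(\frac{\omega_{n-1}}{n}\beta(n,m)\right)^{-(n-m)/n}$ up to the sign $(-1)^{m/2}$, using the expression for $\beta(n,m)$ in the even case $\frac{n}{\omega_{n-1}}\left[\frac{\pi^{n/2}2^m\Gamma(m/2)}{\Gamma((n-m)/2)}\right]^{n/(n-m)}$; note $(n-m)/n = 1/p'$. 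For $m$ odd, $m = 2k+1$, one writes $u = \nabla \Delta^{\frac{m-1}{2}} u = \nabla^m u$ componentwise and uses that the gradient of the fundamental solution of $\Delta^{(m+1)/2}$ is a multiple of $|x|^{m-1-n}(x-y)$; the analogous Fourier/Gamma computation, now involving $\Gamma(\frac{m+1}{2})$ and $\Gamma(\frac{n-m+1}{2})$, reproduces the odd-$m$ value of $\beta(n,m)$.

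The order of steps I would follow: (i) recall the fundamental solution of $\Delta^j$ on $\mathbb{R}^n$ and note that since $m<n$ no logarithmic terms appear; (ii) for even $m$, integrate by parts / use the distributional identity to write $u = E_m * \Delta^{m/2} u$ with $E_m(x) = \gamma_{m,n}|x|^{m-n}$, and justify that the convolution integral converges absolutely because $\nabla^m u$ has compact support and $|x|^{m-n}$ is locally integrable; (iii) compute $\gamma_{m,n}$ via the Fourier transform of Riesz kernels and simplify the Gamma factors to match $(-1)^{m/2}\left(\frac{\omega_{n-1}\beta(n,m)}{n}\right)^{-1/p'}$; (iv) for odd $m$, repeat with $E_m$ replaced by the gradient kernel $x\mapsto \gamma'_{m,n}|x|^{m-1-n}x$ and verify the constant against the odd-$m$ formula. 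The main obstacle is purely the bookkeeping in step (iii)–(iv): the Gamma-function identities, the powers of $2$ and $\pi$, and especially getting the signs $(-1)^{(m-1)/2}$ and $(-1)^{m/2}$ right, since each application of $-\Delta$ contributes a sign; this is exactly the point where Adams's original computation in \cite{A} must be reproduced verbatim, so I would cite that computation rather than redo it in full.
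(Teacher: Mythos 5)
Your proposal is correct and takes essentially the same route as the paper, which itself states the lemma without proof and simply refers to Adams \cite{A} for the computation. The only cosmetic difference is that you suggest computing the normalizing constant $\gamma_{m,n}$ via the Fourier transform of the Riesz kernel, whereas Adams obtains the $m=1$ case by averaging the fundamental theorem of calculus over rays and then iterates with Newtonian potentials (integrating by parts once more to pass from $\Delta^{(m+1)/2}$ to $\nabla\Delta^{(m-1)/2}$ in the odd case); both give the same Gamma-function bookkeeping, and you rightly defer to Adams for it rather than redoing it.
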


\noindent\textbf{Proof of Theorem 1.2:}

It is clear that from Lemma \ref{lemma2.1}, we have $\left(  \frac
{\omega_{n-1}}{n}\right)  \beta(n, m)\left\vert u(x)\right\vert ^{p^{\prime}%
}\leq\left[  I_{m}\ast\left\vert \nabla^{m}u\right\vert (x)\right]
^{p^{\prime}}$ and then we apply Theorem 1.1. This proves the first part of
Theorem 1.2.

To show the second part of Theorem 1.2. Now, suppose $m$ is even: $m=2k,~k\in%
\mathbb{N}
$, we will prove that
\begin{equation}
\underset{u\in W_{N}^{m,\frac{n}{m}}\left(  \Omega\right)  ,~\left\Vert
\nabla^{m}u\right\Vert _{\frac{n}{m}}\leq1}{\sup}\int_{\Omega}\frac
{e^{\beta_{\alpha,n,m}\left\vert u\right\vert ^{\frac{n}{n-m}}}}{\left\vert
x\right\vert ^{\alpha}}dx<\infty\label{3.0}%
\end{equation}
By a density argument, it is enough to prove that
\[
\underset{u\in C_{N}^{\infty}\left(  \Omega\right)  ,~\left\Vert \nabla
^{m}u\right\Vert _{\frac{n}{m}}\leq1}{\sup}\int_{\Omega}\frac{e^{\beta
_{\alpha,n,m}\left\vert u\right\vert ^{\frac{n}{n-m}}}}{\left\vert
x\right\vert ^{\alpha}}dx<\infty
\]
where
\[
C_{N}^{\infty}\left(  \Omega\right)  =\left\{  u\in C^{\infty}\left(
\Omega\right)  \cap C^{m-2}\left(  \overline{\Omega}\right)  :u|_{\partial
\Omega}=\Delta^{j}u|_{\partial\Omega}=0,~1\leq j\leq\left[  \frac{m-1}%
{2}\right]  \right\}  .
\]

Let $u\in C_{N}^{\infty}\left(  \Omega\right)  $ be such that $\left\Vert
\nabla^{m}u\right\Vert _{\frac{n}{m}}=\left\Vert \Delta^{k}u\right\Vert
_{\frac{n}{m}}\leq1$ and set $f:=\Delta^{k}u$ in $\Omega$. Then $u$ is a
solution of the Navier boundary value problem%
\[
\left\{
\begin{array}
[c]{c}%
\Delta^{k}u=f\text{ in }\Omega\\
u=\Delta^{j}u=0\text{ on }\partial\Omega,~j\in\left\{  1\leq j<k\right\}
\end{array}
\right.  .
\]
Now, we extend $f$ by zero outside $\Omega$%
\[
\overline{f}(x)=\left\{
\begin{array}
[c]{c}%
f(x),\text{ }x\in\Omega\\
0,~x\in%
\mathbb{R}
^{n}\setminus\Omega
\end{array}
\right.  .
\]
Define
\[
\overline{u}=\left(  \frac{n}{\omega_{n-1}\beta\left(  n,m\right)  }\right)
^{\frac{n-m}{n}}I_{m}\ast\left\vert \overline{f}\right\vert \text{ in }%
\mathbb{R}
^{n}\text{,}%
\]
so that we have $\left(  -1\right)  ^{k}\Delta^{k}u=\left\vert \overline
{f}\right\vert $ in $%
\mathbb{R}
^{n}$. It's clear that $\overline{u}\geq0$ in $%
\mathbb{R}
^{n}$ and
\[
\beta\left(  n,m\right)  \left\vert \overline{u}\right\vert ^{\frac{n}{n-m}%
}\leq\frac{n}{\omega_{n-1}}\left(  \frac{I_{m}\ast\left\vert \overline
{f}\right\vert }{\left\Vert f\right\Vert _{\frac{n}{m}}}\right)  ^{\frac
{n}{n-m}}\text{ in }%
\mathbb{R}
^{n}.
\]
It can be proved that $\overline{u}\geq\left\vert u\right\vert $ (see
\cite{RS}) and then
\begin{align*}
\int_{\Omega}\frac{e^{\beta_{\alpha,n,m}\left\vert u\right\vert ^{\frac
{n}{n-m}}}}{\left\vert x\right\vert ^{\alpha}}dx  &  \leq\int_{\Omega}%
\frac{e^{\beta_{\alpha,n,m}\left\vert \overline{u}\right\vert ^{\frac{n}{n-m}%
}}}{\left\vert x\right\vert ^{\alpha}}dx\\
&  \leq\int_{\Omega}\frac{\exp\left(  \left(  1-\frac{\alpha}{n}\right)
\frac{n}{\omega_{n-1}}\left\vert \frac{I_{\beta}\ast\overline{f}%
(x)}{\left\Vert \overline{f}\right\Vert _{\frac{n}{m}}}\right\vert ^{\frac
{n}{n-m}}\right)  }{\left\vert x\right\vert ^{\alpha}}dx
\end{align*}
By Theorem 1.1, (\ref{3.0}) follows.

Moreover, it can be checked that the sequence of test functions which gives
the sharpness of Adams' inequality in bounded domains \cite{A} also gives the
sharpness of $\beta_{\alpha,n,m}$. This completes the proof of Theorem 1.2.

\section{Proof of Theorem 1.3 and Theorem 1.4}

\subsection{Proof of Theorem 1.3}

\begin{proof}
Suppose that $m=2k,~k\in%
\mathbb{N}
$. Let $u\in W^{m,\frac{n}{m}}\left(
\mathbb{R}
^{n}\right)  ,\left\Vert \left(  -\Delta+I\right)  ^{k}u\right\Vert _{\frac
{n}{m}}\leq1$, by the density of $C_{0}^{\infty}\left(
\mathbb{R}
^{n}\right)  $ in $W^{m,\frac{n}{m}}\left(
\mathbb{R}
^{n}\right)  $, without loss of generality, we can find a sequence of
functions $u_{l}\in C_{0}^{\infty}\left(
\mathbb{R}
^{n}\right)  $ such that $u_{l}\rightarrow u$ in $W^{m,\frac{n}{m}}\left(
\mathbb{R}
^{n}\right)  $ and $\int_{%
\mathbb{R}
^{n}}\left\vert \left(  -\Delta+I\right)  ^{k}u_{l}\right\vert ^{\frac{n}{m}%
}dx\leq1$ and suppose that supp$u_{l}\subset B_{R_{l}}$ for any fixed $l$. Let
$f_{l}:=\left(  -\Delta+I\right)  ^{k}u_{l}$, then supp$f_{l}\subset B_{R_{l}%
}$. Consider the problem%
\[
\left\{
\begin{array}
[c]{c}%
\left(  -\Delta+I\right)  ^{k}v_{l}=f_{l}^{\#}\\
v_{l}\in W_{N}^{m,2}\left(  B_{R_{l}}\right)
\end{array}
\right.  .
\]
By the property of rearrangement, we have
\begin{equation}
\int_{B_{R_{l}}}\left\vert \left(  -\Delta+I\right)  ^{k}v_{l}\right\vert
^{\frac{n}{m}}dx=\int_{B_{R_{l}}}\left\vert \left(  -\Delta+I\right)
^{k}u_{l}\right\vert ^{\frac{n}{m}}dx\leq1 \label{3.1}%
\end{equation}
and by the Hardy-Littlewood inequality and Proposition 2.4, we get
\[
\int_{B_{R_{l}}}\frac{\phi\left(  \beta_{\alpha,n,m}\left\vert u_{l}%
\right\vert ^{\frac{n}{n-m}}\right)  }{\left\vert x\right\vert ^{\alpha}%
}dx\leq\int_{B_{R_{l}}}\frac{\phi\left(  \beta_{\alpha,n,m}\left\vert
u_{l}^{\#}\right\vert ^{\frac{n}{n-m}}\right)  }{\left\vert x\right\vert
^{\alpha}}dx\leq\int_{B_{R_{l}}}\frac{\phi\left(  \beta_{\alpha,n,m}\left\vert
v_{l}\right\vert ^{\frac{n}{n-m}}\right)  }{\left\vert x\right\vert ^{\alpha}%
}dx
\]
Now, writing
\begin{align*}
\int_{B_{R_{l}}}\frac{\phi\left(  \beta_{\alpha,n,m}\left\vert v_{l}%
\right\vert ^{\frac{n}{n-m}}\right)  }{\left\vert x\right\vert ^{\alpha}}dx
&  \leq\int_{B_{R_{0}}}\frac{\phi\left(  \beta_{\alpha,n,m}\left\vert
v_{l}\right\vert ^{\frac{n}{n-m}}\right)  }{\left\vert x\right\vert ^{\alpha}%
}dx+\int_{B_{R_{l}}\smallsetminus B_{R_{0}}}\frac{\phi\left(  \beta
_{\alpha,n,m}\left\vert v_{l}\right\vert ^{\frac{n}{n-m}}\right)  }{\left\vert
x\right\vert ^{\alpha}}dx\\
&  =I_{1}+I_{2}%
\end{align*}
where $R_{0}$ is a constant and will be chosen later. Then we will prove that
both $I_{1}$ and $I_{2}$ are bounded uniformly by a constant.

Using Theorem 1.2, we can estimate $I_{1}$. Indeed, we just need to construct
an auxiliary radial function $w_{l}\in W_{N}^{m,\frac{n}{m}}\left(  B_{R_{0}%
}\right)  $ with $\left\Vert \nabla^{m}w_{l}\right\Vert _{\frac{n}{m}}\leq1$
which increases the integral we are interested in. Such a function was
constructed in \cite{RS}. For the sake of completion, we give the detail here.
For each $i\in\left\{  1,2,...,m-1\right\}  $ we define
\[
g_{i}\left(  \left\vert x\right\vert \right)  :=\left\vert x\right\vert
^{m-i},~\forall x\in B_{R_{0}}%
\]
so $g_{i}\in W_{rad}^{m,\frac{n}{m}}\left(  B_{R_{0}}\right)  $. Moreover,
\[
\Delta^{j}g_{i}\left(  \left\vert x\right\vert \right)  =\left\{
\begin{array}
[c]{c}%
c_{i}^{j}\left\vert x\right\vert ^{m-i-2j}\text{ for }j\in\left\{
1,2,...k-i\right\} \\
0\text{ for }j\in\left\{  k-i+1,...,k\right\}
\end{array}
\right.  ~\forall x\in B_{R_{0}}%
\]
where
\[
c_{i}^{j}=%
{\displaystyle\prod\limits_{h=1}^{j}}
\left[  n+m-2\left(  h+i\right)  \right]  \left[  m-2\left(  i+h-1\right)
\right]  \text{, }\forall j\in\left\{  1,2,...k-i\right\}  .
\]
Let
\[
z_{l}\left(  \left\vert x\right\vert \right)  :=v_{l}\left(  \left\vert
x\right\vert \right)  -%
{\displaystyle\sum\limits_{i=1}^{k-1}}
a_{i}g_{i}\left(  \left\vert x\right\vert \right)  -a_{k}~\forall x\in
B_{R_{0}}%
\]
where
\begin{align*}
a_{i}  &  :=\frac{\Delta^{k-i}v_{l}\left(  R_{0}\right)  -%
{\displaystyle\sum\limits_{j=1}^{i-1}}
a_{j}\Delta^{k-i}g_{j}\left(  R_{0}\right)  }{\Delta^{k-i}g_{i}\left(
R_{0}\right)  },~\forall i\in\left\{  1,2,...k-1\right\}  ,\\
a_{k}  &  :=v_{l}\left(  R_{0}\right)  -%
{\displaystyle\sum\limits_{i=1}^{k-1}}
a_{i}g_{i}\left(  R_{0}\right)  .
\end{align*}
We can check that (see \cite{RS})
\begin{align*}
z_{l}  &  \in W_{N,rad}^{m,\frac{n}{m}}\left(  B_{R_{0}}\right)  ,\\
\nabla^{m}v_{l}  &  =\nabla^{m}z_{l}\text{ in }B_{R_{0}}\text{.}%
\end{align*}
We have the following lemma whose proof can be found in \cite{RS}.

\begin{lemma}
For $0<\left\vert x\right\vert \leq R_{0}$, there exists some constant
$d(m,n,R_{0})$ depending only on $m,n,R_{0}$ such that
\begin{align*}
\left\vert v_{l}\left(  \left\vert x\right\vert \right)  \right\vert
^{\frac{n}{n-m}}  & \leq\left\vert z_{l}\left(  \left\vert x\right\vert
\right)  \right\vert ^{\frac{n}{n-m}}\left(  1+c_{m,n}%
{\displaystyle\sum\limits_{j=1}^{k-1}}
\frac{1}{R_{0}^{2j\frac{n}{m}-1}}\left\Vert \Delta^{k-j}v_{l}\right\Vert
_{W^{1,\frac{n}{m}}}^{\frac{n}{m}}+\frac{c_{m,n}}{R_{0}^{n-1}}\left\Vert
v_{l}\right\Vert _{W^{1,\frac{n}{m}}}^{\frac{n}{m}}\right)  ^{\frac{n}{n-m}%
}\\
& +d(m,n,R_{0}).
\end{align*}

\end{lemma}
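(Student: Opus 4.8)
The plan is to reduce the estimate to the decomposition $v_l=z_l+\sum_{i=1}^{k-1}a_i g_i+a_k$ introduced just above, in which $z_l\in W_{N,rad}^{m,\frac{n}{m}}(B_{R_0})$ satisfies $z_l(R_0)=0$ (by construction of $a_k$) and $\nabla^m v_l=\nabla^m z_l$ on $B_{R_0}$, while the polynomial correction
\[
P(r):=\sum_{i=1}^{k-1}a_i g_i(r)+a_k,\qquad g_i(r)=r^{m-i},
\]
carries only the regular part of $v_l$ near the origin. The heart of the matter is a pointwise bound $|P(r)|\le K$ on $(0,R_0]$ in which $K$ is controlled by $R_0$ and the Sobolev norms occurring in the statement; then $|v_l(r)|\le|z_l(r)|+K$ is to be fed into an elementary power inequality.

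To bound $P$, I would first eliminate the constant $a_k$ by its defining formula: for $0<r\le R_0$,
\[
P(r)=v_l(R_0)+\sum_{i=1}^{k-1}a_i\bigl(r^{m-i}-R_0^{m-i}\bigr),\qquad\bigl|r^{m-i}-R_0^{m-i}\bigr|\le R_0^{m-i},
\]
the last bound holding because $0\le r^{m-i}\le R_0^{m-i}$ when $m-i\ge k+1\ge1$. The coefficients $a_i$ are then handled by induction on $i$: from
\[
a_i=\frac{\Delta^{k-i}v_l(R_0)-\sum_{\ell=1}^{i-1}a_\ell\,\Delta^{k-i}g_\ell(R_0)}{\Delta^{k-i}g_i(R_0)}
\]
and the explicit values $\Delta^{k-i}g_\ell(R_0)=c_\ell^{k-i}R_0^{2i-\ell}$ — which follow from the stated formula for $\Delta^j g_i$ together with $m=2k$, the denominator in particular being $c_i^{k-i}R_0^{i}\ne0$ — one sees that each $a_i$ is a linear combination, with coefficients that are explicit powers of $R_0$ depending only on $m,n$, of the numbers $\Delta^{k-j}v_l(R_0)$ for $1\le j\le i$. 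Since $v_l\in W_N^{m,2}(B_{R_l})$ (and $v_l\in W^{m,\frac{n}{m}}(B_{R_l})$ by elliptic regularity), the radial function $\Delta^{k-j}v_l$ vanishes on $\partial B_{R_l}$ (as $k-j\le k-1=\left[\frac{m-1}{2}\right]$), so integrating $\bigl|(\Delta^{k-j}v_l)'(s)\bigr|$ over $[r,R_l]$ and applying Hölder with exponent $\frac{n}{m}$ — the argument underlying the Radial Lemma above, now carried out on $B_{R_l}$ and valid since $m\ge2$ — gives $\bigl|\Delta^{k-j}v_l(R_0)\bigr|\le C(m,n)R_0^{-\kappa}\|\Delta^{k-j}v_l\|_{W^{1,\frac{n}{m}}}$ for an explicit $\kappa=\kappa(m,n)>0$, and similarly $|v_l(R_0)|\le C(m,n)R_0^{-\kappa}\|v_l\|_{W^{1,\frac{n}{m}}}$. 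Assembling these and combining the powers of $R_0$ yields $\sup_{0<r\le R_0}|P(r)|\le K$, where $K$ involves $R_0$ and the first powers of the norms $\|v_l\|_{W^{1,\frac{n}{m}}}$ and $\|\Delta^{k-j}v_l\|_{W^{1,\frac{n}{m}}}$.

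To conclude, one notes that $\|(-\Delta+I)^k v_l\|_{\frac{n}{m}}=\|f_l^{\#}\|_{\frac{n}{m}}=\|(-\Delta+I)^k u_l\|_{\frac{n}{m}}\le1$, so by elliptic estimates all these norms, and hence $K$, are bounded by a constant depending only on $m,n$ (and $R_0$). Put $q=\frac{n}{n-m}>1$ and let $S$ be exactly the quantity
\[
S=c_{m,n}\sum_{j=1}^{k-1}\frac{1}{R_0^{2j\frac{n}{m}-1}}\|\Delta^{k-j}v_l\|_{W^{1,\frac{n}{m}}}^{\frac{n}{m}}+\frac{c_{m,n}}{R_0^{n-1}}\|v_l\|_{W^{1,\frac{n}{m}}}^{\frac{n}{m}}
\]
from the statement; for a suitable $c_{m,n}$ one then has $K\le C(m,n,R_0)\,S^{m/n}$, since $K$ carries the relevant norms to the first power while $S$ carries them to the $\frac{n}{m}$-th power. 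Combining $|v_l(r)|\le|z_l(r)|+K$ with the sharp elementary inequality
\[
(a+K)^q\le a^q(1+S)^q+\sup_{a'\ge0}\Bigl[(a'+K)^q-(a')^q(1+S)^q\Bigr],\qquad a\ge0,
\]
and using the arithmetic identity $\frac{mq}{n}=q-1$ — which makes $K^q$ absorbable into $S^{q-1}$, so that the displayed supremum is bounded by some $d=d(m,n,R_0)$ once $K$ and $S$ are bounded as above — one arrives at
\[
|v_l(r)|^{\frac{n}{n-m}}\le|z_l(r)|^{\frac{n}{n-m}}(1+S)^{\frac{n}{n-m}}+d(m,n,R_0),
\]
which is the assertion (the case of all the norms vanishing being trivial).

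The routine pieces are the one-line radial integration, the induction on the $a_i$, and the power inequality. The genuine obstacle is the bookkeeping of exponents: one must verify that, after the recursion for the $a_i$ and the estimate $|r^{m-i}-R_0^{m-i}|\le R_0^{m-i}$, the coefficient multiplying $\|\Delta^{k-j}v_l\|$ in $K$ carries precisely the power of $R_0$ that, matched (through $K\le C S^{m/n}$) against the $\frac{n}{m}$-th power of that norm inside $S$, produces the claimed exponent $R_0^{1-2jn/m}$ (and $R_0^{1-n}$ for the $v_l$-term). This requires careful tracking of how the factors $R_0^{2i-\ell}$ from the recursion, the factor $R_0^{m-i}$ from $g_i$, and the exponent $\kappa$ from the radial estimate interact; it is exactly the computation performed in \cite{RS}.
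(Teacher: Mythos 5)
The paper itself gives no proof of this lemma — it simply refers to \cite{RS} — and your proposal reconstructs exactly that argument: peel off the radial polynomial $P(r)=\sum a_i g_i+a_k$, bound it pointwise via trace estimates at $r=R_0$ (using the Navier vanishing of $\Delta^{j}v_l$ on $\partial B_{R_l}$ and Hölder along the radial direction), apply Hölder once more to absorb $K$ into $S^{m/n}$, and finish with the convexity inequality whose remainder is controlled precisely because $\frac{m}{n}\cdot\frac{n}{n-m}=\frac{n}{n-m}-1$. This is the same approach; the only caveat is that you explicitly defer the final exponent bookkeeping (verifying that the powers $R_0^{1-2jn/m}$ and $R_0^{1-n}$ actually appear) to the computation in \cite{RS}, so the write-up, while structurally correct and honest about it, is not fully self-contained.
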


Now, setting
\[
w_{l}\left(  \left\vert x\right\vert \right)  :=z_{l}\left(  \left\vert
x\right\vert \right)  \left(  1+c_{m,n}%
{\displaystyle\sum\limits_{j=1}^{k-1}}
\frac{1}{R_{0}^{2j\frac{n}{m}-1}}\left\Vert \Delta^{k-j}v_{l}\right\Vert
_{W^{1,\frac{n}{m}}}^{\frac{n}{m}}+\frac{c_{m,n}}{R_{0}^{n-1}}\left\Vert
v_{l}\right\Vert _{W^{1,\frac{n}{m}}}^{\frac{n}{m}}\right)  .
\]
Since
\begin{align*}
z_{l}  &  \in W_{N,rad}^{m,\frac{n}{m}}\left(  B_{R_{0}}\right)  ,\\
\nabla^{m}v_{l}  &  =\nabla^{m}z_{l}\text{ in }B_{R_{0}}\text{.}%
\end{align*}
we have
\[
w_{l}\in W_{N,rad}^{m,\frac{n}{m}}\left(  B_{R_{0}}\right)
\]
and%
\[
\left\Vert \nabla^{m}w_{l}\right\Vert _{\frac{n}{m}}=\left\Vert \nabla
^{m}z_{l}\right\Vert _{\frac{n}{m}}\left(  1+c_{m,n}%
{\displaystyle\sum\limits_{j=1}^{k-1}}
\frac{1}{R_{0}^{2j\frac{n}{m}-1}}\left\Vert \Delta^{k-j}v_{l}\right\Vert
_{W^{1,\frac{n}{m}}}^{\frac{n}{m}}+\frac{c_{m,n}}{R_{0}^{n-1}}\left\Vert
v_{l}\right\Vert _{W^{1,\frac{n}{m}}}^{\frac{n}{m}}\right)  .
\]
Note that
\begin{align*}
\left\Vert \nabla^{m}z_{l}\right\Vert _{\frac{n}{m}}  &  =\left\Vert
\nabla^{m}v_{l}\right\Vert _{\frac{n}{m}}\\
&  \leq\left(  1-%
{\displaystyle\sum\limits_{j=1}^{k-1}}
\left\Vert \Delta^{k-j}v_{l}\right\Vert _{W^{1,\frac{n}{m}}}^{\frac{n}{m}%
}-\left\Vert v_{l}\right\Vert _{W^{1,\frac{n}{m}}}^{\frac{n}{m}}\right)
^{m/n}\\
&  \leq1-\frac{m}{n}%
{\displaystyle\sum\limits_{j=1}^{k-1}}
\left\Vert \Delta^{k-j}v_{l}\right\Vert _{W^{1,\frac{n}{m}}}^{\frac{n}{m}%
}-\frac{m}{n}\left\Vert v_{l}\right\Vert _{W^{1,\frac{n}{m}}}^{\frac{n}{m}}%
\end{align*}
we have%
\begin{align*}
\left\Vert \nabla^{m}w_{l}\right\Vert _{\frac{n}{m}}  &  \leq\left(
1-\frac{m}{n}%
{\displaystyle\sum\limits_{j=1}^{k-1}}
\left\Vert \Delta^{k-j}v_{l}\right\Vert _{W^{1,\frac{n}{m}}}^{\frac{n}{m}%
}-\frac{m}{n}\left\Vert v_{l}\right\Vert _{W^{1,\frac{n}{m}}}^{\frac{n}{m}%
}\right)  \times\\
&  \times\left(  1+c_{m,n}%
{\displaystyle\sum\limits_{j=1}^{k-1}}
\frac{1}{R_{0}^{2j\frac{n}{m}-1}}\left\Vert \Delta^{k-j}v_{l}\right\Vert
_{W^{1,\frac{n}{m}}}^{\frac{n}{m}}+\frac{c_{m,n}}{R_{0}^{n-1}}\left\Vert
v_{l}\right\Vert _{W^{1,\frac{n}{m}}}^{\frac{n}{m}}\right) \\
&  \leq1
\end{align*}
if we choose $R_{0}$ sufficiently large.

Finally, note that
\[
I_{1}\leq e^{\beta_{0}d(m,n,R_{0})}\int_{B_{R_{0}}}\frac{e^{\beta_{0}w_{l}%
^{2}}}{\left\vert x\right\vert ^{\alpha}}dx,
\]
using Theorem 1.2, we can conclude that $I_{1}$ is bounded by a constant since
$w_{l}\in W_{N,rad}^{m,\frac{n}{m}}\left(  B_{R_{0}}\right)  $ and $\left\Vert
\nabla^{m}w_{l}\right\Vert _{\frac{n}{m}}\leq1.$

Now, we will estimate $I_{2}$. Note that%
\begin{align*}
I_{2}  &  =\int_{B_{R_{l}}\smallsetminus B_{R_{0}}}\frac{\phi\left(
\beta_{\alpha,n,m}\left\vert v_{l}\right\vert ^{\frac{n}{n-m}}\right)
}{\left\vert x\right\vert ^{\alpha}}dx\\
&  \leq\frac{1}{R_{0}^{\alpha}}\int_{B_{R_{l}}\smallsetminus B_{R_{0}}}%
\phi\left(  \beta_{\alpha,n,m}\left\vert v_{l}\right\vert ^{\frac{n}{n-m}%
}\right)  dx
\end{align*}
By the same argument as that in \cite{RS}, we can conclude that $I_{2}\leq
c\left(  m,n,R_{0}\right)  .$

Combining the above estimates and using the Fatou lemma, we can conclude that
\[
\underset{u\in W^{m,\frac{n}{m}}\left(
\mathbb{R}
^{n}\right)  ,\left\Vert \left(  -\Delta+I\right)  ^{\frac{m}{2}}u\right\Vert
_{\frac{n}{m}}\leq1}{\sup}\int_{%
\mathbb{R}
^{n}}\frac{\phi\left(  \beta_{\alpha,n,m}\left\vert u\right\vert ^{\frac
{n}{n-m}}\right)  dx}{\left\vert x\right\vert ^{\alpha}}dx<\infty.
\]

When $\beta>\beta_{\alpha,n,m}$, again, it's easy to check that the sequence
given by D. Adams \cite{A} will make our supremum blow up. This completes the
proof of Theorem 1.3.
\end{proof}

\subsection{Proof of Theorem 1.4}

\begin{proof}
It suffices to prove that
\[
\underset{u\in W^{2,2}\left(
\mathbb{R}
^{4}\right)  ,\int_{%
\mathbb{R}
^{4}}\left(  \left\vert \Delta u\right\vert ^{2}+\tau\left\vert \nabla
u\right\vert ^{2}+\sigma\left\vert u\right\vert ^{2}\right)  \leq1}{\sup}%
\int_{%
\mathbb{R}
^{4}}\frac{\left(  e^{32\pi^{2}\left(  1-\frac{\alpha}{4}\right)  u^{2}%
}-1\right)  }{\left\vert x\right\vert ^{\alpha}}dx<\infty.
\]
In fact, we will prove a stronger result that
\begin{equation}
\underset{u\in W^{2,2}\left(
\mathbb{R}
^{4}\right)  ,\left\Vert -\Delta u+cu\right\Vert _{2}\leq1}{\sup}\int_{%
\mathbb{R}
^{4}}\frac{\left(  e^{32\pi^{2}\left(  1-\frac{\alpha}{4}\right)  u^{2}%
}-1\right)  }{\left\vert x\right\vert ^{\alpha}}dx<\infty\label{4.0}%
\end{equation}
where $c>0$ is chosen such that $\left\Vert -\Delta u+cu\right\Vert _{2}%
^{2}\leq\int_{%
\mathbb{R}
^{4}}\left(  \left\vert \Delta u\right\vert ^{2}+\tau\left\vert \nabla
u\right\vert ^{2}+\sigma\left\vert u\right\vert ^{2}\right)  .$

Let $u\in W^{2,2}\left(
\mathbb{R}
^{4}\right)  ,\left\Vert -\Delta u+cu\right\Vert _{2}\leq1$. By the density of
$C_{0}^{\infty}\left(
\mathbb{R}
^{4}\right)  $ in $W^{2,2}\left(
\mathbb{R}
^{4}\right)  $, we can find a sequence of functions $u_{k}$ in $C_{0}^{\infty
}\left(
\mathbb{R}
^{4}\right)  $ such that $u_{k}\rightarrow u$ in $W^{2,2}\left(
\mathbb{R}
^{4}\right)  ,$ supp\, $u\subset B_{R_{k}}$. Without loss of generality, we
assume $\left\Vert -\Delta u_{k}+cu_{k}\right\Vert _{2}\leq1.$ By the Fatou
lemma, we have
\begin{equation}
\int_{%
\mathbb{R}
^{4}}\frac{\left(  e^{32\pi^{2}\left(  1-\frac{\alpha}{4}\right)  u^{2}%
}-1\right)  }{\left\vert x\right\vert ^{\alpha}}dx\leq\underset{k\rightarrow
\infty}{\lim\inf}\int_{B_{R_{k}}}\frac{\left(  e^{32\pi^{2}\left(
1-\frac{\alpha}{4}\right)  u_{k}^{2}}-1\right)  }{\left\vert x\right\vert
^{\alpha}}dx. \label{4.1}%
\end{equation}
Now, set $f_{k}:=-\Delta u_{k}+cu_{k}$ and consider the problem
\[
\left\{
\begin{array}
[c]{c}%
-\Delta v_{k}+cv_{k}=f_{k}^{\#}\text{ in }B_{R_{k}}\\
v_{k}\in W_{0}^{1,2}\left(  B_{R_{k}}\right)
\end{array}
\right.  .
\]
We have that $v_{k}\in W_{N}^{2,2}\left(  B_{R_{k}}\right)  $. Moreover, by
Proposition 2.4 and the property of rearrangement, we have
\begin{align}
\left\Vert -\Delta u_{k}+cu_{k}\right\Vert _{2}  &  =\left\Vert -\Delta
v_{k}+cv_{k}\right\Vert _{2}\leq1\label{4.2}\\
\int_{B_{R_{k}}}\frac{\left(  e^{32\pi^{2}\left(  1-\frac{\alpha}{4}\right)
u_{k}^{2}}-1\right)  }{\left\vert x\right\vert ^{\alpha}}dx  &  \leq
\int_{B_{R_{k}}}\frac{\left(  e^{32\pi^{2}\left(  1-\frac{\alpha}{4}\right)
v_{k}^{2}}-1\right)  }{\left\vert x\right\vert ^{\alpha}}dx.\nonumber
\end{align}
Now, we write
\begin{align*}
&  \int_{B_{R_{k}}}\frac{\left(  e^{32\pi^{2}\left(  1-\frac{\alpha}%
{4}\right)  v_{k}^{2}}-1\right)  }{\left\vert x\right\vert ^{\alpha}}dx\\
=  &  \int_{B_{R_{0}}}\frac{\left(  e^{32\pi^{2}\left(  1-\frac{\alpha}%
{4}\right)  v_{k}^{2}}-1\right)  }{\left\vert x\right\vert ^{\alpha}}%
dx+\int_{B_{R_{k}}\setminus B_{R_{0}}}\frac{\left(  e^{32\pi^{2}\left(
1-\frac{\alpha}{4}\right)  v_{k}^{2}}-1\right)  }{\left\vert x\right\vert
^{\alpha}}dx\\
&  =I_{1}+I_{2}.
\end{align*}
where $R_{0}$ only depends on $c$ and will be chosen later.

Choose $R_{0}\geq\left(  \frac{1}{2\pi^{2}}\left(  \frac{1}{2c}+\frac{1}%
{c^{2}}\right)  \right)  ^{1/3}$, then by the Radial Lemma (Lemma 2.2) and
(\ref{4.2}), we have that $\left\vert v_{k}(x)\right\vert \leq1$ when
$\left\vert x\right\vert \geq R_{0}.$ Thus
\begin{align}
I_{2}  &  =\int_{B_{R_{k}}\setminus B_{R_{0}}}\frac{\left(  e^{32\pi
^{2}\left(  1-\frac{\alpha}{4}\right)  v_{k}^{2}}-1\right)  }{\left\vert
x\right\vert ^{\alpha}}dx\label{4.3}\\
&  \leq\frac{1}{R_{0}^{\alpha}}\int_{B_{R_{k}}\setminus B_{R_{0}}}\left(
e^{32\pi^{2}\left(  1-\frac{\alpha}{4}\right)  v_{k}^{2}}-1\right)
dx\nonumber\\
&  \leq\frac{1}{R_{0}^{\alpha}}%
{\displaystyle\sum\limits_{j=1}^{\infty}}
\frac{\left(  32\pi^{2}\left(  1-\frac{\alpha}{4}\right)  \right)  ^{j}}%
{j!}\int_{B_{R_{k}}}v_{k}^{2}\nonumber\\
&  \leq\frac{1}{R_{0}^{\alpha}}\frac{1}{c^{2}}%
{\displaystyle\sum\limits_{j=1}^{\infty}}
\frac{\left(  32\pi^{2}\left(  1-\frac{\alpha}{4}\right)  \right)  ^{j}}%
{j!}\nonumber\\
&  =C(c).\nonumber
\end{align}
Now, we estimate $I_{1}$. Put%
\[
w_{k}(\left\vert x\right\vert )=\left\{
\begin{array}
[c]{c}%
v_{k}(\left\vert x\right\vert )-v_{k}\left(  R_{0}\right)  \text{, }%
0\leq\left\vert x\right\vert \leq R_{0}\\
0\text{ , }r\geq R_{0}%
\end{array}
\right.  .
\]
Then it's easy to check that $w_{k}\in W_{N}^{2,2}\left(  B_{R_{0}}\right)  $.
Moreover, when $0<\left\vert x\right\vert \leq R_{0}$, using Radial Lemmas
(Lemma 2.2 and 2.3), we have
\begin{align*}
\left(  v_{k}\left(  \left\vert x\right\vert \right)  \right)  ^{2}  &
=\left[  w_{k}\left(  \left\vert x\right\vert \right)  +v_{k}\left(
R_{0}\right)  \right]  ^{2}\\
&  =w_{k}^{2}\left(  \left\vert x\right\vert \right)  +2w_{k}\left(
\left\vert x\right\vert \right)  v_{k}\left(  R_{0}\right)  +\left[
v_{k}\left(  R_{0}\right)  \right]  ^{2}\\
&  \leq w_{k}^{2}\left(  \left\vert x\right\vert \right)  +w_{k}^{2}\left(
\left\vert x\right\vert \right)  \left[  v_{k}\left(  R_{0}\right)  \right]
^{2}+1+\left[  v_{k}\left(  R_{0}\right)  \right]  ^{2}\\
&  \leq w_{k}^{2}\left(  \left\vert x\right\vert \right)  \left[  1+\frac
{C}{R_{0}^{2}}\left\Vert v_{k}\right\Vert _{W^{1,2}}^{2}\right]  +d(c,R_{0}).
\end{align*}
Let
\[
z_{k}(\left\vert x\right\vert ):=w_{k}\left(  \left\vert x\right\vert \right)
\left[  1+\frac{C}{R_{0}^{2}}\left\Vert v_{k}\right\Vert _{W^{1,2}}%
^{2}\right]  ^{1/2}%
\]
then $z_{k}\in W_{N}^{2,2}\left(  B_{R_{0}}\right)  $ since $w_{k}\in
W_{N}^{2,2}\left(  B_{R_{0}}\right)  $. More importantly, we have
\begin{align*}
\left\Vert \Delta z_{k}\right\Vert _{2}^{2}  &  =\left\Vert \Delta
w_{k}\right\Vert _{2}^{2}\left[  1+\frac{C}{R_{0}^{2}}\left\Vert
v_{k}\right\Vert _{W^{1,2}}^{2}\right] \\
&  =\left\Vert \Delta v_{k}\right\Vert _{2}^{2}\left[  1+\frac{C}{R_{0}^{2}%
}\left\Vert v_{k}\right\Vert _{W^{1,2}}^{2}\right] \\
&  \leq(1-2c\left\Vert \nabla v_{k}\right\Vert _{2}^{2}-c^{2}\left\Vert
v_{k}\right\Vert _{2}^{2})(1+\frac{C}{R_{0}^{2}}\left\Vert \nabla
v_{k}\right\Vert _{2}^{2}+\frac{C}{R_{0}^{2}}\left\Vert v_{k}\right\Vert
_{2}^{2})\\
&  \leq1
\end{align*}
if we choose $R_{0}$ sufficiently large.

Then using Theorem 1.2, we have
\begin{align}
I_{1}  &  =\int_{B_{R_{0}}}\frac{\left(  e^{32\pi^{2}\left(  1-\frac{\alpha
}{4}\right)  v_{k}^{2}}-1\right)  }{\left\vert x\right\vert ^{\alpha}%
}dx\label{4.4}\\
&  \leq C\left(  c\right)  \int_{B_{R_{0}}}\frac{e^{32\pi^{2}\left(
1-\frac{\alpha}{4}\right)  z_{k}^{2}}}{\left\vert x\right\vert ^{\alpha}%
}dx\nonumber\\
&  \leq C(c).\nonumber
\end{align}
From (\ref{4.3}) and (\ref{4.4}), we get (\ref{4.0}). Moreover, if we choose
$0<c<\min\left\{  \frac{\tau}{2},\sqrt{\sigma}\right\}  $, then we have
\[
\left\Vert -\Delta u+cu\right\Vert _{2}^{2}\leq\int_{%
\mathbb{R}
^{4}}\left(  \left\vert \Delta u\right\vert ^{2}+\tau\left\vert \nabla
u\right\vert ^{2}+\sigma\left\vert u\right\vert ^{2}\right)  ,\ \forall u\in
W^{2,2}\left(
\mathbb{R}
^{4}\right)
\]
and thus the proof of Theorem 1.4 is completed.
\end{proof}

\textbf{Acknowledgement:} The authors wish to thank the local organizers of
the International Conference in Geometry, Analysis and PDEs at Jiaxing, China
where this work was presented.

\end{document}